\newcommand{\mathscr}[1]{\mathcal{#1}}
\newcommand{\mathbbm}[1]{\mathbb{#1}}
\theoremstyle{plain}
\newtheorem{theorem}{Theorem}[section]
\newtheorem{corollary}[theorem]{Corollary}
\newtheorem{lemma}[theorem]{Lemma}
\newtheorem{proposition}[theorem]{Proposition}
\numberwithin{equation}{section}
\def\E{{\mathbb E}}
\def\R{{\mathbb R}}
\def\P{{\mathbb P}}
\newcommand{\ds}{\displaystyle}
\newcommand\numberthis{\addtocounter{equation}{1}\tag{\theequation}}
\author{M. Baroun}
\address{Cadi Ayyad University, Faculty of Sciences Semlalia, 2390, Marrakesh, Morocco}
\address{Laboratory of Mathematics, Modeling and Automatic Systems, Marrakesh, Morocco}
\email{m.baroun@uca.ac.ma}
\author{Mohamed Fadili}
\thanks{Corresponding author: Mohamed Fadili (m.fadili@uca.ma).}
\address{Ecole Normale Sup\'erieure, Universit\'e Cadi Ayyad, Marrakech, Morocco}
\address{Laboratory of Mathematics, Modeling and Automatic Systems, Marrakech, Morocco}
\email{m.fadili@uca.ac.ma}
\author{A. Khchine}
\address{Cadi Ayyad University, National School of Applied Sciences, 575, Marrakesh, Morocco}
\address{Laboratory of modelisation  of complex systems (LMCS), Marrakesh, Morocco}
\email{a.khchine@uca.ac.ma}
\author{L. Maniar}
\address{Lahcen Maniar, Cadi Ayyad University, Faculty of Sciences Semlalia, 2390, Marrakesh, Morocco}
\address{Laboratory of Mathematics, Modeling and Automatic Systems, Marrakesh, Morocco}	
\email{maniar@uca.ma}
\title[]{Carleman Estimates and Controllability of Stochastic Degenerate Parabolic Heat Equations}
\keywords{Forward and backward stochastic parabolic degenerate equations, Carleman estimates, null controllability, observability estimate}
\subjclass[2020]{Primary 60H15, 93E99, 35K65; Secondary 93B05, 93B07}
\begin{document}
\begin{abstract}
We study null controllability for a class of stochastic degenerate parabolic equations in both the weakly and strongly degenerate regimes. We first prove a global Carleman estimate for a linear forward stochastic degenerate equation with multiplicative noise. This estimate yields an observability inequality and a unique continuation property, which in turn imply null controllability for a backward stochastic degenerate equation. We then establish a second Carleman estimate for a backward equation with a weight that does not vanish at $t=0$ and combine it with the Hilbert Uniqueness Method (HUM). As a result, we obtain null controllability for a forward stochastic degenerate equation with two controls, together with weighted estimates for the controls and the state.
\end{abstract}
\maketitle
\enlargethispage{3pt}

\section{Introduction and main results}

Degenerate parabolic equations describe diffusion processes whose smoothing effect
weakens or vanishes at part of the boundary. The resulting loss of uniform
ellipticity changes both the natural energy space and the boundary condition at
the degeneracy point. When the equation is driven by noise, controllability
requires estimates that account simultaneously for this singular spatial
structure and for the adaptedness of the state and the controls. In particular,
the deterministic Carleman framework cannot be transferred directly, since
It\^o correction terms and the martingale component must enter the weighted
estimates.

This paper develops such estimates for one-dimensional stochastic parabolic
equations whose diffusion coefficient may be weakly or strongly degenerate at
$x=0$. The analysis has two purposes. The first is to derive an observability
inequality for a forward equation with multiplicative noise and to use it to
control a backward stochastic degenerate equation. The second is to establish a
Carleman estimate adapted to the initial time and, through the Hilbert
Uniqueness Method, to obtain null controllability for a forward equation with a
control in both the drift and the diffusion.

We first consider the backward stochastic degenerate equation
\begin{equation*}\label{BSHE}\tag{BSDE}
\begin{cases}
\ds d_t z+\left(a(x) z_x\right)_x dt +b z(t,x)dt +c k dt= \mathbbm{1}_{\mathscr{O}}v dt +kdW(t) \quad\text{ in } Q_T,\\
C z=0,\qquad \text{ on } \Sigma_T ,\\
z(T,\cdot)=z_T, \qquad \text{ in } (0,1),
\end{cases}
\end{equation*}	
where the solution pair $(z,k)$ is understood in the sense of
\cite{hupeng,Parpeng}, $\mathscr{O}$ is a nonempty open subset of $(0,1)$,
$\mathbbm{1}_{\mathscr{O}}$ is its characteristic function, and $v$ is a
distributed control supported in $\mathscr{O}$.

We also consider the forward stochastic degenerate equation
\begin{equation*}\label{form6.1}\tag{FSDE2}
\begin{cases}
\ds	dy=\left[ \left(a(x) y_x\right)_x  +F+ \mathbbm{1}_{\mathscr{O}} h\right]dt + \left(G+H\right) dW(t),\quad\text{ in } Q_T,\\
C y=0,\qquad \text{ on } \Sigma_T,\\
y(0,\cdot)=y_0(\cdot)\qquad \text{ in } (0,1),
\end{cases}
\end{equation*} 
where the pair $(h,H)$ acts respectively on the drift and the diffusion.

Let $T >0$, $Q_T=(0,T)\times(0,1)$, $\Sigma_T=(0,T)\times\{0,1\}$, $\mathscr{O}\subset (0,1)$, and $\mathscr{O}_T=(0,T)\times\mathscr{O}$. Let $(\Omega, \mathcal{F}, {\mathcal{F}_t}_{t\geq0}, \P)$ be a complete filtered probability space carrying a one-dimensional standard Brownian motion ${W(t)}_{t\geq0}$, and let ${\mathcal{F}_t}_{t\ge0}$ be its natural filtration augmented by all $\P$-null sets. The diffusion coefficient $a$ may degenerate at $x=0$ (i.e., $a(0)=0$) and satisfies one of the following assumptions:

$i)$ The weak degeneracy:
\begin{equation}\label{WD}
\text{(WD)},\quad	
\begin{cases}
(i)\; a \in \mathcal{C}([0,1])\cap\mathcal{C}^1((0,1]),\; a >0 \text{ in } (0,1],\;a(0)=0,\\
(ii)\;\exists K\in[0,1)\text{ such that }xa'(x)\leqslant Ka(x),\quad\text{for all } x\in[0,1],
\end{cases}
\end{equation}
or,\
$ii)$ The strong degeneracy:
\begin{equation}\label{SD}
\text{(SD)},\quad
\begin{cases}
(i)\; a \in \mathcal{C}^1([0,1]),\; a >0 \text{ in } (0,1],\;a(0)=0,\\
(ii)\;\exists K\in[1,2)\text{ such that }xa'(x)\leqslant Ka(x),\text{ for all } x\in[0,1], \\
(iii)\;\begin{cases}
\ds\exists\theta\in(1,K],\; x\mapsto\frac{a(x)}{x^{\theta}}\text{ is nondecreasing near } 0, \text{ if } K >1,\\
\ds\exists\theta\in(0,1),\; x\mapsto\frac{a(x)}{x^{\theta}}\text{ is nondecreasing near } 0, \text{ if } K=1.
\end{cases}
\end{cases}
\end{equation}
In both regimes, $x\mapsto x^2/a(x)$ is nondecreasing on $(0,1]$. Indeed,
\[
\left(\frac{x^2}{a(x)}\right)'=
\frac{x\bigl(2a(x)-xa'(x)\bigr)}{a(x)^2}
\geqslant (2-K)\frac{x}{a(x)}>0,
\qquad x\in(0,1].
\]
Thus this monotonicity is a consequence of \eqref{WD}--\eqref{SD}, rather than
an additional assumption. The
coefficients $b$, $c$, $F$, and $G$ are subject to the assumptions stated
below. The boundary operator $C$ imposes Dirichlet conditions in the weakly
degenerate case and, in the strongly degenerate case, $u(t,1)=0$ together with
the natural weighted Neumann condition
$\lim_{x\to0^+}a(x)u_x(t,x)=0$.

The controllability of stochastic parabolic equations was initiated, in this
setting, by the Carleman approach of Barbu, R\u{a}\c{s}canu, and Tessitore
\cite{barbuRascanu}. Tang and Zhang \cite{tangzhang} subsequently obtained null
controllability results for forward and backward stochastic parabolic
equations. The method has since been extended to semilinear equations and
systems; see, among others, \cite{Santamaria,Fadi1}. These works also show a
feature specific to forward stochastic equations: in general, a drift control
alone does not compensate for the martingale component, and a second control in
the diffusion is needed. The duality argument used below is based on the
Hilbert Uniqueness Method introduced by Lions \cite{J.Lions1}.

For degenerate stochastic equations, Liu and Yu \cite{liuyu} proved Carleman
estimates in the model case $a(x)=x^\alpha$, $\alpha\in(0,2)$, and derived
controllability consequences. The present work treats a broader class of
diffusion coefficients characterized by \eqref{WD} and \eqref{SD}. Its main
contributions are the following. First, a single spatial weight is constructed
for both weak and strong degeneracy, leading to a global Carleman estimate for
the forward equation below. Second, the resulting observability inequality
yields null controllability of the backward equation \eqref{BSHE}. Third, a
modified time weight, bounded at $t=0$, gives a weighted HUM construction for
\eqref{form6.1}, including estimates for the state and both controls.

The first Carleman estimate is established for
\begin{equation*}\label{equ:3.16}\tag{FSDE}
\begin{cases}
\ds d_t y-\left[\left(a(x) y_x\right)_x +b y(t,x)\right]dt= c ydW(t) ,\quad\text{ in } Q_T,\\
C y=0 ,\quad \text{ on } \Sigma_T ,\\
y(0,\cdot)=y_0(\cdot) ,\quad \text{ in } (0,1),
\end{cases}
\end{equation*}
where $a$ satisfies either \eqref{WD} or \eqref{SD}; see Theorem
\ref{theorem3.10}. The associated observability inequality also implies a
unique continuation property. By duality, it gives the first main result.

\begin{theorem}[Null controllability of the backward stochastic degenerate equation \eqref{BSHE}]\label{meantheorem}~\par
For every final datum $z_T$ in $L^2(\Omega,\mathcal{F}_T;L^2(0,1))$, there exists a control $v$ in $L_{\mathcal{F}}^2\left(\Omega;L^2(0,T;L^2(0,1))\right)$ such that $z(0,x)=0$ for almost every $x\in (0,1)$.
\end{theorem}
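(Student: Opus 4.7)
The plan is to derive Theorem \ref{meantheorem} from the Carleman estimate of Theorem \ref{theorem3.10} via the classical two-step strategy: first transform the Carleman inequality into an unweighted observability estimate for the (forward) adjoint equation \eqref{equ:3.16}, then invoke duality to construct the control $v$ for the backward equation \eqref{BSHE}.

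For the first step, starting from the Carleman estimate for \eqref{equ:3.16}, I would aim to establish an observability inequality of the form
\begin{equation*}
\E\|y(T,\cdot)\|_{L^2(0,1)}^{2}\,\leq\,C\,\E\int_{0}^{T}\!\!\int_{\omega}|y(t,x)|^{2}\,dx\,dt
\end{equation*}
valid for every solution $y$ of \eqref{equ:3.16} with deterministic initial datum $y_{0}\in L^{2}(0,1)$. On a central time subinterval such as $[T/4,3T/4]$, the Carleman weights are bounded above and below and can be removed; combining with It\^o's formula applied to $\|y(t,\cdot)\|_{L^{2}}^{2}$, which yields a Gronwall-type dissipation/growth inequality using the boundedness of $b$ and $c$, propagates the local-in-time bound first backward to an estimate at $t=0$ of the relevant energy and forward to a bound on $\E\|y(T,\cdot)\|^{2}$.

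For the second step, I would apply the penalized Hilbert Uniqueness Method. For each $\epsilon>0$, consider the functional
\begin{equation*}
J_{\epsilon}(y_{0})=\frac{1}{2}\,\E\!\int_{0}^{T}\!\!\int_{\omega}|y|^{2}\,dx\,dt+\epsilon\|y_{0}\|_{L^{2}(0,1)}-\E\!\int_{0}^{1}y(T,x)\,z_{T}(x)\,dx
\end{equation*}
on $L^{2}(0,1)$, where $y$ solves \eqref{equ:3.16} with initial datum $y_{0}$. Observability guarantees that $J_{\epsilon}$ is strictly convex, continuous, and coercive, hence admits a unique minimizer $y_{0}^{\epsilon}$. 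Setting $v_{\epsilon}=\mathbbm{1}_{\omega}y^{\epsilon}$, the Euler equation combined with It\^o's formula applied to $\langle y^{\epsilon}(t),z^{\epsilon}(t)\rangle$, where $(z^{\epsilon},k^{\epsilon})$ solves \eqref{BSHE} with control $v_{\epsilon}$ and final datum $z_{T}$, yields $\|z^{\epsilon}(0,\cdot)\|_{L^{2}(0,1)}\leq\epsilon$ together with a uniform bound $\E\!\int_{0}^{T}\!\!\int_{0}^{1}|v_{\epsilon}|^{2}\,dx\,dt\leq C\,\E\|z_{T}\|^{2}$. Extracting a weakly convergent subsequence $v_{\epsilon}\rightharpoonup v$ in $L^{2}_{\mathcal{F}}(\Omega;L^{2}(0,T;L^{2}(0,1)))$ and passing to the limit in \eqref{BSHE} produces a control driving $z(0,\cdot)=0$, noting that $z(0,\cdot)$ is automatically $\mathcal{F}_{0}$-measurable and hence deterministic.

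The principal difficulty lies in the first step: transferring the Carleman estimate into the unweighted observability inequality in the presence of the degenerate boundary at $x=0$. The Carleman weight vanishes at $t=0,T$ and, via its spatial part tailored to $a$, also degenerates near $x=0$, so the It\^o/energy manipulation propagating the central bound forward to $t=T$ must respect the boundary behaviour — trace condition in the (WD) case, weighted Neumann-type condition in the (SD) case — so that the boundary terms arising from integration by parts against $(a\,y_{x})_{x}$ are absorbed rather than amplified. Once the observability is secured, the duality step is essentially mechanical; the key stochastic point is that the martingale cross-term $\E\int\langle c\,y^{\epsilon},k^{\epsilon}\rangle\,dt$ generated by the quadratic covariation of $y^{\epsilon}$ and $z^{\epsilon}$ cancels exactly against the coupling term $c\,k$ built into \eqref{BSHE}, which is the structural reason the backward equation must carry this seemingly extraneous drift.
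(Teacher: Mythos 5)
Your proposal is correct and reaches the theorem by a sound route, but the second half differs from the paper's argument. The observability step is essentially the paper's Proposition \ref{Prop4.1}: the paper also combines an It\^o/Gronwall energy inequality $\E\|y(t)\|^2\leq e^{C(t-s)}\E\|y(s)\|^2$ with the Hardy--Poincar\'e inequality and the Carleman estimate of Theorem \ref{theorem3.10} to bound $\E\|y(T)\|^2$ by the weighted local term $\E\int_0^T\int_\omega s^3\theta^3 e^{-2s\varphi}y^2$ (whose weight is bounded, so your unweighted form follows); your worry about boundary terms at the degenerate endpoint is already absorbed by the $H^1_a$/$H^2_a$ functional setting. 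Where you diverge is the passage from observability to controllability: the paper does not run a penalized HUM minimization. Instead it introduces the two affine solution operators $S_0:\eta\mapsto z^{0,\eta}(0,\cdot)$ and $L_0:v\mapsto z^{v,0}(0,\cdot)$, computes their adjoints via the same It\^o duality identity you use (which is exactly where the $ck$ drift cancels the quadratic covariation $\langle cy,k\rangle$, as you correctly observe), and concludes $\mathrm{Range}(S_0)\subset \mathrm{Range}(L_0)$ from the abstract range-inclusion (Douglas-type) lemma, Lemma \ref{Douglas}. The paper's route is shorter and avoids the $\epsilon$-approximation and weak-limit extraction entirely; your variational route is more constructive, yields the explicit control bound $\E\int_0^T\int_\omega|v|^2\leq C\,\E\|z_T\|^2$ as a by-product, and is the version that adapts to semilinear perturbations. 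Both are valid given Proposition \ref{Prop4.1}.
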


For the forward system \eqref{form6.1}, we prove a second global Carleman
estimate whose time weight remains bounded at $t=0$. This estimate supplies the
weighted coercivity needed in the HUM argument and leads to the following
result.

\begin{theorem}\label{meantheorem2}
Fix $s>s_0$, where $s_0$ is the threshold in Proposition \ref{Prop5.1}.
For any $y_0\in L^2(\Omega,\mathcal{F}_0;L^2((0,1),a(x)x^{-2}\,dx))$ and any pair $(F,G)\in\mathcal{S}_s$, there exists a pair of controls
$\ds (\widehat{h},\widehat{H})\in L_{\mathcal{F}}^2(0,T;L^2(\mathscr{O}))\times L_{\mathcal{F}}^2(0,T;L^2(0,1))$ such that the associated solution $\ds \widehat{y}$ to the system \eqref{form6.1} satisfies $\ds \widehat{y}(T)=0$ in $(0,1)$ a.s. Moreover, we have the following estimate:
\begin{multline}\label{equtheo5.2}
\ds \E\int_{Q_T}e^{2s \overline{\varphi}}\widehat{y}^2 \,dx\,dt
+\E\int_0^T\int_{\mathscr{O}}e^{2s \overline{\varphi}}s^{-3}\bar{\theta}^{-3}\widehat{h}^2 \,dx\,dt \\
+\E\int_{Q_T}e^{2s\overline{\varphi}}s^{-2}\bar{\theta}^{-2}\widehat{H}^2 \,dx\,dt
\leqslant C \E\int_0^1\frac{a(x)}{x^2}|y_0(x)|^2\,dx
+ C \|(F,G)\|_{\mathcal{S}_s}^2 .
\end{multline}
\end{theorem}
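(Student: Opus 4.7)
The plan is to follow the HUM (Hilbert Uniqueness Method) scheme using the new Carleman estimate, whose weight does not vanish at $t=0$, for the adjoint backward equation associated with \eqref{form6.1}. The non-vanishing at $t=0$ is essential: HUM pairs the adjoint state at the initial time with $y_0$, so the penalization weights must remain finite there in order for $\E\int_0^1 y_0 z(0)\,dx$ to be absorbed by the observability term. Starting from the said Carleman estimate and choosing $s$ sufficiently large to absorb the lower-order contributions due to $b$ and $c$, one first derives an observability inequality of the shape
\begin{equation*}
\E\|z(0)\|_{L^2(0,1)}^2 + \mathcal{N}(z,k)^2 \leqslant C\,\E\!\int_0^T\!\!\int_\omega e^{-2s\varphi}s^3\theta^3 z^2\,dx\,dt + C\,\E\!\int_{Q_T} e^{-2s\varphi}s^2\theta^3 k^2\,dx\,dt,
\end{equation*}
where $\mathcal{N}$ is a norm dual to $\mathcal{S}_s$ and $(z,k)$ is the adjoint pair issued from a terminal datum $z_T\in L^2(\Omega,\mathcal{F}_T;L^2(0,1))$.

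Next, for $\varepsilon>0$ and fixed $(y_0,F,G)$, I would introduce the penalized functional
\begin{equation*}
J_\varepsilon(z_T) = \tfrac12\E\!\int_0^T\!\!\int_\omega e^{-2s\varphi}s^3\theta^3 z^2\,dx\,dt + \tfrac12\E\!\int_{Q_T} e^{-2s\varphi}s^2\theta^3 k^2\,dx\,dt + \tfrac{\varepsilon}{2}\E\|z_T\|_{L^2(0,1)}^2 + \E\!\int_0^1 y_0 z(0)\,dx + \Lambda_{F,G}(z,k),
\end{equation*}
where $\Lambda_{F,G}$ is the linear form coupling the source $(F,G)$ to $(z,k)$ through Itô duality. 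The observability inequality, combined with Cauchy--Schwarz on the linear part, yields continuity, strict convexity and coercivity of $J_\varepsilon$, hence a unique minimizer $\widehat{z}_T^{\,\varepsilon}$ with associated adjoint $(\widehat{z}^{\,\varepsilon},\widehat{k}^{\,\varepsilon})$. Taking the controls
\begin{equation*}
\widehat{h}^{\,\varepsilon}:=e^{-2s\varphi}s^3\theta^3\,\widehat{z}^{\,\varepsilon}\mathbbm{1}_\omega,\qquad \widehat{H}^{\,\varepsilon}:=e^{-2s\varphi}s^2\theta^3\,\widehat{k}^{\,\varepsilon},
\end{equation*}
the Euler--Lagrange identity together with Itô's formula applied to $\langle\widehat{y}^{\,\varepsilon},\widehat{z}^{\,\varepsilon}\rangle$ yield the approximate controllability bound $\E\|\widehat{y}^{\,\varepsilon}(T)\|_{L^2(0,1)}^2\leqslant C\varepsilon(\E\|y_0\|^2+\|(F,G)\|_{\mathcal{S}_s}^2)$ together with a uniform-in-$\varepsilon$ version of \eqref{equtheo5.2}.

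Finally, passing to the limit $\varepsilon\to 0$, weak compactness in the weighted $L^2_{\mathcal{F}}$ spaces provides limit controls $(\widehat{h},\widehat{H})$ and a limit state $\widehat{y}$ that solves \eqref{form6.1}; the above bound forces $\widehat{y}(T)=0$ almost surely, and the estimate \eqref{equtheo5.2} survives by weak lower semicontinuity of the weighted quadratic forms. The main obstacle I foresee lies in the input to Step 1: building a time weight $\theta$ that stays bounded below near $t=0$ yet still generates a genuine Carleman identity for the degenerate operator under (WD) or (SD), and simultaneously absorbing the cross term arising from the multiplicative noise $c\,y\,dW$ into the weighted $k$-integral on the right-hand side. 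With the classical weight that vanishes at both endpoints this absorption is almost automatic; here, only the endpoint $t=T$ is available, so the balance between the degenerate boundary terms at $x=0$ and the stochastic cross term must be redone by hand.
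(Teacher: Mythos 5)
Your proposal is correct in outline and rests on the same two pillars as the paper — the backward Carleman estimate with a time weight $\bar{\theta}$ that is constant on $[0,T/2]$ (so that $e^{-s\bar{\varphi}(0)}\neq 0$ and the term $\E\int_0^1 A(0)e^{-2s\bar{\varphi}(0)}z^2(0)\,dx$ is available), a penalized variational problem, and a weak limit $\varepsilon\to 0$ — but it runs HUM in the dual variable, whereas the paper runs it in the primal one. You minimize a functional over the adjoint terminal data $z_T$, with an $\varepsilon\|z_T\|^2$ regularization and a linear term $\E\int_0^1 y_0z(0)\,dx+\Lambda_{F,G}(z,k)$ whose coercivity is guaranteed by an observability inequality. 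The paper instead minimizes $\mathcal{J}_\varepsilon(h,H)$ over the controls themselves, with the terminal penalization $\tfrac{1}{2\varepsilon}\E\|y(T)\|^2$ and, crucially, a weighted state term $\tfrac12\E\int e^{-2s\bar{\varphi}_\varepsilon}y^2$ built from a second, $\varepsilon$-regularized weight $\bar{\varphi}_\varepsilon$ satisfying $e^{-2s\bar{\varphi}}e^{2s\bar{\varphi}_\varepsilon}\leqslant 1$; the optimality system then produces the adjoint pair $(z_\varepsilon,Z_\varepsilon)$ with source $-e^{2s\bar{\varphi}_\varepsilon}y_\varepsilon$, and Proposition~\ref{Prop5.1} is applied directly to that system rather than packaged first as an observability inequality. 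The two formulations are dual to each other and buy essentially the same thing; yours is closer to the classical Lions/Fursikov--Imanuvilov presentation, the paper's avoids having to name the dual norm $\mathcal{N}$ explicitly.

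Two points in your plan need attention. First, the estimate \eqref{equtheo5.2} contains the weighted state term $\E\int_{Q_T}e^{2s\varphi}\widehat{y}^2\,dx\,dt$, and in your dual formulation the state never enters the functional, so this bound does not come out of the minimization; you would need either to add a state term to $J_\varepsilon$ (which is exactly what the paper's $e^{-2s\bar{\varphi}_\varepsilon}y^2$ term and the auxiliary weight $\bar{\varphi}_\varepsilon$ accomplish) or to prove it a posteriori by a separate weighted energy/duality argument. Second, your claimed observability inequality places $\mathcal{N}(z,k)^2$ on the left while the Carleman estimate of Proposition~\ref{Prop5.1} leaves the noise term $\E\int_{Q_T}s^2\bar{\theta}^2e^{-2s\bar{\varphi}}\bar{y}^2$ on the \emph{right}; it cannot be absorbed, which is precisely why the second, globally distributed control $\widehat{H}$ is needed. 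You acknowledge this balance issue at the end, and it is resolvable, but the inequality as you wrote it (with the $k$-weight appearing on both sides with compatible powers) has to be checked carefully against the powers $s^2\bar{\theta}^3$ actually produced by the estimate.
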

For each fixed $s>s_0$, the constant in \eqref{equtheo5.2} may depend on
$s$ (as well as on the structural data); in particular, it includes the
constant $C_s$ introduced in \eqref{initial-weight-bound} below.
Here $\mathcal{S}_s$ is the space
{\small
\begin{multline*}
\mathcal{S}_s=\Bigl\{(F,G)\in L_{\mathcal{F}}^2(0,T;L^2(0,1))^2 : \\
\E\int_{Q_T} e^{2s\overline{\varphi}} s^{-3}\bar{\theta}^{-3}\frac{a}{x^2}F^2\,dx\,dt
+\E\int_{Q_T} e^{2s\overline{\varphi}} s^{-2}\bar{\theta}^{-2}G^2\,dx\,dt <\infty\Bigr\}.
\end{multline*}
}
endowed with its canonical norm. The functions $\overline{\varphi}$ and
$\bar{\theta}$ are defined in \eqref{newtheta}.

Section 2 introduces the weighted functional setting and recalls the
well-posedness results used throughout the paper. Section 3 proves the forward
Carleman estimate. Section 4 derives observability, unique continuation, and
Theorem \ref{meantheorem}. Section 5 establishes the modified Carleman estimate
and proves Theorem \ref{meantheorem2}. The appendix contains a stochastic
Caccioppoli inequality.

Throughout the paper, $C$ denotes a generic positive constant that may change from line to line.

\section{Preliminaries}
We introduce the weighted spaces associated with the diffusion coefficient $a$.
We shall also use the Hilbert space
\[
L^2\!\left((0,1),\frac{a(x)}{x^2}\,dx\right)
:=\left\{u:(0,1)\to\mathbb{R}\text{ measurable}:\
\int_0^1\frac{a(x)}{x^2}|u(x)|^2\,dx<\infty\right\},
\]
endowed with the inner product
$\langle u,v\rangle_{a/x^2}=\int_0^1\frac{a(x)}{x^2}u(x)v(x)\,dx$.
Its completeness follows from the standard completeness of $L^2$ spaces
with respect to the measure $\frac{a(x)}{x^2}\,dx$.
In the (WD) case:
$$\displaystyle H_{a}^1=\left\lbrace u \in L^2(0,1) : u \text{ absolutely continuous in } [0,1], \sqrt{a}u_x \in L^2(0,1) \text{ and } u(1)=u(0)=0 \right\rbrace$$
and
$$\displaystyle H_{a}^2=\left\lbrace u \in H_{a}^1(0,1) : au_x \in H^1(0,1) \right\rbrace.$$
In the (SD) case:
$$\displaystyle H_{a}^1=\left\lbrace u \in L^2(0,1) : u \text{ absolutely continuous in } (0,1], \sqrt{a}u_x \in L^2(0,1) \text{ and } u(1)=0 \right\rbrace$$
and
\begin{align*}
	\displaystyle H_{a}^2  &=\left\lbrace u \in H_{a}^1(0,1) : au_x \in H^1(0,1) \right\rbrace\\
	 &=\big\lbrace u \in L^2(0,1) : u \text{ absolutely continuous in } (0,1],\,
	au \in H_0^1(0,1),\, au_x \in H^1(0,1)\\
	 &\hspace{5.5cm}\text{and } (au_x)(0)=0 \big\rbrace.
\end{align*}
In both cases, the norms are defined as follows:
\begin{equation}\label{norms}
	\|u\|_{H_{a}^1}^2  =\|u\|_{L^2(0,1)}^2+\|\sqrt{a} u_x\|_{L^2(0,1)}^2, \,\,\,\|u\|_{H_{a}^2}^2 =\|u\|_{H_{a}^1}^2+\|{(a u_x )}_x\|_{L^2(0,1)}^2.
\end{equation}
In the sequel, equations \eqref{equ:3.16} and \eqref{BSHE} are studied under the following assumptions:
\begin{enumerate}
\item[1)] $\mathscr{O} \subset (0,1)$ is an open non-empty subset of the interval $(0,1)$ such that $\overline{\mathscr{O}}\subset (0,1)$.
\item[2)] $\{W(t); t\geq 0\}$ is a standard one-dimensional Brownian motion on a complete probability space $(\Omega,\mathcal{F},\P)$ endowed with the filtration $\mathcal{F}_t=\sigma\{W(s):s\in [0,t]\}\vee \{A\in \mathcal{F}, \P(A)=0\}$.
\item[3)] The coefficients $b, c$ are adapted processes with values in $W^{1,\infty}(0,1)$ and $b,c \in L^{\infty}(\Omega\times[0,T];W^{1,\infty}(0,1))$.
\end{enumerate} 

For a Banach space $X$, we denote by $L_{\mathcal{F}}^2(\Omega;L^2(0,T;X))$ the space of all $X$-valued $\{\mathcal{F}_t\}_{t\geq0}$-adapted processes $z$ such that $ \E\big( \|z\|^2_{L^2(0,T;X)}\big)  < \infty$, endowed with its canonical norm. We similarly denote by $L_{\mathcal{F}}^2(\Omega;\mathcal{C}([0,T];X))$ the space of all $X$-valued adapted continuous processes $z$ such that $\E\big(\|z\|^2_{\mathcal{C}([0,T];X)}\big) <\infty$, and by $L^{\infty}_{\mathcal{F}}(\Omega;X)$ the space of all adapted essentially bounded $X$-valued processes.

Consider the forward stochastic equation:
\begin{equation}\label{FSHE1}
	\begin{cases}
		\ds d_t y-\left[\left(a(x) y_x\right)_x +b_{0} y(t,x)\right]dt= f(t,x)dt +c_{0} ydW(t) \quad\text{ in } Q_T,\\
		C y=0 \qquad \text{ on } \Sigma_T ,\\
		y(0,\cdot)=y_0(\cdot) \qquad \text{ in } (0,1),  
	\end{cases}
\end{equation}
where 
\begin{equation}
	\begin{cases}
		i) \, f \in L_{\mathcal{F}}^2(\Omega; L^2(0,T;L^2(0,1))),\\
		ii)\, b_{0}, c_{0} \in L_{\mathcal{F}}^{\infty}(\Omega;L^{\infty}(0,T;L^{\infty}(0,1))),\\
		iii)\,  y_0 \in  L^{2}(\Omega,\mathcal{F}_0, \P;L^2(0,1)). 
	\end{cases}
\end{equation}
Under these assumptions, it is well known (see \cite{barbuRascanu,PratoZab}) that there exists a unique solution $y$ of \eqref{FSHE1} belonging to the space $L_{\mathcal{F}}^2(\Omega; \mathcal{C}([0,T];L^2(0,1)))\cap L_{\mathcal{F}}^2(\Omega; L^2(0,T;H_a^1))$, 
with for a suitable constant $C$ independent of $y_0$ and $f$:
$$\E\left[\sup_{t\in[0,T]}\|y(t)\|_{L^2(0,1)}^2+\int_{0}^{T}\|y(t)\|_{H_a^1}^2 dt\right]\leqslant C \left(\|y_0\|_{L^2(0,1)}^2+\E \int_{0}^{T}\|f\|_{L^2(0,1)}^2 dt\right).$$
Furthermore, if $y_0\in H_a^1$, then $y\in L_{\mathcal{F}}^2(\Omega; \mathcal{C}([0,T];H_a^1))\cap L_{\mathcal{F}}^2(\Omega; L^2(0,T;H_a^2))$.

We also consider the backward equation:
\begin{equation}\label{BSHE1}
	\begin{cases}
		\ds d_t z+\left(a(x) z_x\right)_x dt +b_0 z(t,x)dt +c_0 k dt= G(t,x) dt +kdW(t) \quad\text{ in } Q_T,\\
		C z=0\qquad \text{ on } \Sigma_T ,\\
		z( T,\cdot)=z_T \qquad \text{ in } (0,1),  
	\end{cases}
\end{equation}
where 
\begin{equation}\label{BSHEassum}
	\begin{cases}
		i) \, G \in L_{\mathcal{F}}^{2}(\Omega; L^2(0,T;L^2(0,1))),\\
		ii)\, z_T\in L^{2}(\Omega,\mathcal{F}_T, \P;L^2(0,1)).
	\end{cases}
\end{equation} 
Similarly, by \cite{ParRasca,Tess}, under \eqref{BSHEassum} the backward equation \eqref{BSHE1} has a unique solution
$(z,k)$ with $z \in L_{\mathcal{F}}^{2}(\Omega; \mathcal{C}([0,T];L^2(0,1)))\cap L_{\mathcal{F}}^{2}(\Omega; L^2(0,T;H_a^1))$ 
and $k\in L_{\mathcal{F}}^{2}(\Omega; L^2(0,T;L^2(0,1)))$.

\section{Carleman estimate for forward stochastic parabolic degenerate equation}
We now consider the following uncontrolled stochastic problem, for which the diffusion coefficient $a$ degenerates at zero, that is, $a(0)=0$ and $a >0$ in $(0,1]$:
\begin{equation}\label{nequa3.1}
	\begin{cases}
		d_ty(t,x)=\left(a(x) y_x\right)_xdt +f(t,x)dt+g(t,x)dW(t),\,\,\text{ in } Q_T,\\
		C y=0,\qquad \text{ on } \Sigma_T,\\
		y(0,\cdot)=y_0(\cdot), 
	\end{cases}
\end{equation}
where $f$ belongs to $L_{\mathcal{F}}^2(\Omega,L^2(0,T;L^2(0,1)))$ and $g$ belongs to $L_{\mathcal{F}}^2(\Omega,L^2(0,T; H_a^1))$.

To define the weight function, fix a nonempty open subset $\mathscr{O}=(a,b) \Subset(0,1)$ and $\mathscr{O}_1=(a_1,b_1)\Subset \mathscr{O}=(a,b)$. Let $\xi \in \mathcal{C}^3(\R)$ satisfy $0\leq \xi\leq 1$ and
$$\xi(x)=\begin{cases}
	1\quad \text{ if } x\in [0,a_1],\\ 0\quad \text{ if } x\in [b_1,1]. 
\end{cases}$$
Define $\ds \phi(x)=d-\int_{0}^{x}\frac{r}{a(r)}dr$, where the real $d$ is chosen such that $\phi >0$
on $(0,1)$, and let $\rho$ be defined by $\ds \rho(x)=\int_{x}^{1}\frac{r}{a(r)}dr$, for all $x\in (0,1)$. Fix $\lambda>0$ sufficiently large and define
$\ds \psi(x)=e^{2\lambda\|\rho\|_{\infty}}-e^{\lambda\rho(x)}$ and 
$$\ds \beta(x)=\xi(x)\phi(x)+(1-\xi(x))\psi(x).$$
Notice that $\beta\geq0$ by construction. Finally, we set $$\ds \varphi(t,x)=\theta(t)\beta(x),$$
where $\ds\theta(t)=\frac{1}{t^4(T-t)^4}$ on $(0,T)$.

We recall some useful properties of the function $\theta$ (see \cite{hjjaj}).
\begin{lemma}\label{rem3.5}
	The function $\theta$ satisfies
	$$\lim\limits_{t\to 0^+}\theta(t)=\lim\limits_{t\to T^-}\theta(t)=+\infty,\quad \theta(t)\geq c_1, \quad |\dot{\theta}|\leq c_2 \theta^2,\quad |\ddot{\theta}(t)|\leq c_3 \theta^3,$$
	where $c_1=(\frac2T)^8,\,\, c_2=8(\frac{T}{2})^7$ and $ c_3=80(\frac{T}{2})^{14}$.\\
	Moreover, we have $\ds |\dot{\theta}(t)|\leq c_4 \theta^{\frac32}$ and $\ds |\ddot{\theta}(t)|\leq c_5 \theta^2$ with $c_4=T^3$ and $c_5=80(\frac{T}{2})^6$.
\end{lemma}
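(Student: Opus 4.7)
The statement collects purely calculus facts about the explicit function $\theta(t)=[t(T-t)]^{-4}$, so the plan is just careful differentiation and elementary extremization. I would not introduce any machinery beyond the AM--GM style estimate $t(T-t)\le T^2/4$ and the bound $|T-2t|\le T$ on $(0,T)$.

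First, the two limits as $t\to 0^+$ and $t\to T^-$ are immediate since $t(T-t)\to 0$ and $\theta$ is its inverse fourth power. For the minimum $\theta(t)\ge c_1$, the function $t(T-t)$ attains its maximum $T^2/4$ at $t=T/2$, so $\theta(t)\ge[T^2/4]^{-4}=(2/T)^8=c_1$.

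Next I would compute the derivatives explicitly. Writing $u(t)=t(T-t)$, $u'=T-2t$, $u''=-2$, one has
\[
\dot\theta=-4(T-2t)\,u^{-5},\qquad \ddot\theta=20(T-2t)^2\,u^{-6}+8\,u^{-5}.
\]
For the bound $|\dot\theta|\le c_2\theta^2$, multiply $|\dot\theta|$ by $u^{8}$ and bound $4|T-2t|\,u^{3}\le 4T\cdot(T^2/4)^3=T^7/16=8(T/2)^7=c_2$. For $|\dot\theta|\le c_4\theta^{3/2}$, multiply by $u^{6}$ and bound $4|T-2t|\,u\le 4T\cdot(T^2/4)=T^3=c_4$. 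For the second-derivative bounds, multiply $|\ddot\theta|$ by $u^{12}$ (resp. $u^{8}$); both terms are nonnegative, so one gets $20(T-2t)^2u^{6}+8u^{7}$ (resp. $20(T-2t)^2u^{2}+8u^{3}$) and it suffices to bound each summand on its own by plugging in $|T-2t|\le T$ and $u\le T^2/4$, then collecting the constants into $c_3=80(T/2)^{14}$ and $c_5=80(T/2)^6$.

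The only mildly delicate point is that when one applies the crude bound to \emph{both} summands of $\ddot\theta$ simultaneously, the resulting constant slightly exceeds $c_3$ (resp.\ $c_5$); in fact, $|T-2t|$ and $u$ cannot be simultaneously maximal, since $(T-2t)^2+4u=T^2$. I would therefore substitute $(T-2t)^2=T^2-4u$ into the expression for $\ddot\theta$ and optimize the one-variable polynomial in $u\in(0,T^2/4]$ that results; this yields the stated constants with room to spare. Everything is elementary once the derivatives are in hand, so I do not expect any conceptual obstacle — just bookkeeping.
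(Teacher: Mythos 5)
Your proposal is correct, and it is worth noting that the paper itself gives no proof of this lemma at all: it simply says ``see \cite{hjjaj}'' and moves on. Your computation supplies the missing verification. Writing $u=t(T-t)$, your formulas $\dot\theta=-4(T-2t)u^{-5}$ and $\ddot\theta=20(T-2t)^2u^{-6}+8u^{-5}$ are right, and the bounds for $c_1$, $c_2$, $c_4$ follow exactly as you say (for $c_2$: $4|T-2t|u^3\le 4T(T^2/4)^3=T^7/16=8(T/2)^7$; for $c_4$: $4|T-2t|u\le T^3$). Your flagged subtlety for the second derivative is real and you resolve it correctly: the term-by-term bound gives $20T^2(T^2/4)^6+8(T^2/4)^7=88(T/2)^{14}$, which overshoots $c_3=80(T/2)^{14}$ (and likewise $88(T/2)^6$ versus $c_5=80(T/2)^6$), whereas substituting $(T-2t)^2=T^2-4u$ turns $\ddot\theta\,u^{12}$ into $20T^2u^6-72u^7$, whose maximum over $(0,T^2/4]$ is attained at $u=5T^2/21$ and is roughly $8.6\,(T/2)^{14}$, comfortably below $80\,(T/2)^{14}$; the analogous computation for $20T^2u^2-72u^3$ gives about $0.23\,T^6$ versus $c_5=1.25\,T^6$. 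So your argument is complete and, unlike the paper's bare citation, actually exhibits why the stated constants are valid even though the crudest estimate narrowly misses them.
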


Recall that the unbounded operator $\ds \mathcal{M}\,:\,\mathcal{D}(\mathcal{M})\subset L^2(0,1)\longrightarrow L^2(0,1)$ defined by $$\mathcal{M}y=(a(x)y_x)_x,\quad \mathcal{D}(\mathcal{M})=H_a^2$$
generates a contraction strongly continuous semi-group $(T(t))_{t\geq0}$ (see \cite{camp}). The following result is a weighted identity for the forward stochastic degenerate parabolic operator $dy-\mathcal{M}ydt$. Its proof may be found in \cite{liuyu}.
\begin{lemma}\label{lem3.2}
	Let $y$ be a $H_a^2$-valued continuous semimartingale, and set $z=e^{-s\varphi}y$. Then, for a.e.\,\,$(t,x)\in Q_T$ and $\P$-a.s. $\omega\in \Omega$, one has the following weighted identity:
	\begin{align*}
		 &e^{-s\varphi}\left[A z-(az_x)_x\right]\left[dy-(ay_x)_x dt\right]\\
		 &=\left[A z-(az_x)_x\right]^2 dt +d\left(\frac12 A z^2+\frac12az_x^2\right)-\frac12A(dz)^2-\frac12 a (dz_x)^2-(az_xdz)_x\\
		 &\qquad+\left[sa(a\varphi_x)_xz_x z-sAa\varphi_x z^2-\frac12 s a(a\varphi_x)_{x x}z^2+sa^2\varphi_x z_x^2\right]_x dt \numberthis\label{nequa3.2}\\
		 &\qquad\quad+\left\{s(A a \varphi_x)_x-sA(a\varphi_x)_x+\frac12[sa(a\varphi_x)_{x x}]_x-\frac12A_t\right\}z^2 dt\\
		 &\qquad\qquad-\left\{s a[(a\varphi_x)_x+a\varphi_{x x}]\right\}z_x^2 dt,
	\end{align*}
	where $\ds A=s\varphi_t-s^2a\varphi_x^2$.
\end{lemma}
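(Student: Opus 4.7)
The plan is to reduce the identity to a systematic bookkeeping exercise after the substitution $z=e^{-s\varphi}y$. First I would write $y=e^{s\varphi}z$ and, since $\varphi$ is smooth and deterministic, use the product/chain rule together with Itô's formula to obtain
\[
dy = e^{s\varphi}(s\varphi_t z\,dt + dz),\qquad
e^{-s\varphi}(ay_x)_x = (az_x)_x + 2sa\varphi_x z_x + s(a\varphi_x)_x z + s^2 a\varphi_x^2 z.
\]
Subtracting and using $A=s\varphi_t-s^2a\varphi_x^2$ gives the key decomposition
\[
e^{-s\varphi}\bigl[dy-(ay_x)_x\,dt\bigr]
= dz + \bigl[Az-(az_x)_x\bigr]dt + \bigl[-2sa\varphi_x z_x - s(a\varphi_x)_x z\bigr]dt.
\]
Multiplying through by $Az-(az_x)_x$ immediately produces the square term $[Az-(az_x)_x]^2\,dt$; the remaining work is to rewrite the two cross-products $[Az-(az_x)_x]\,dz$ and $[Az-(az_x)_x]\cdot[-2sa\varphi_x z_x - s(a\varphi_x)_x z]\,dt$ in the claimed form.

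For the first cross term I would apply Itô to $\tfrac12 Az^2$ to get $Az\,dz = d(\tfrac12 Az^2) - \tfrac12 A_t z^2\,dt - \tfrac12 A(dz)^2$, and use spatial integration by parts together with $az_x\,dz_x = d(\tfrac12 az_x^2) - \tfrac12 a(dz_x)^2$ to get
\[
-(az_x)_x\,dz = -(az_x\,dz)_x + d\bigl(\tfrac12 az_x^2\bigr) - \tfrac12 a(dz_x)^2.
\]
Adding these reproduces the Itô differential $d(\tfrac12 Az^2+\tfrac12 az_x^2)$, the quadratic-variation terms $-\tfrac12 A(dz)^2 - \tfrac12 a(dz_x)^2$, the flux $-(az_x\,dz)_x$, and contributes $-\tfrac12 A_t z^2\,dt$ to the $z^2$ coefficient.

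For the second cross term I would treat the four sub-products in turn: $-2sAa\varphi_x z z_x = -sAa\varphi_x(z^2)_x$ is rewritten as a divergence plus $s(Aa\varphi_x)_x z^2$; $-sA(a\varphi_x)_x z^2$ is kept; $2sa\varphi_x z_x(az_x)_x$ is expanded via $2z_x(az_x)_x = 2a'z_x^2 + a(z_x^2)_x$ and then a further integration by parts yields $(sa^2\varphi_x z_x^2)_x - sa^2\varphi_{xx} z_x^2$; and $s(a\varphi_x)_x z(az_x)_x$ is handled by integrating by parts twice, producing the remaining pieces of the divergence $(sa(a\varphi_x)_x zz_x-\tfrac12 sa(a\varphi_x)_{xx} z^2)_x$, the coefficient $\tfrac12[sa(a\varphi_x)_{xx}]_x z^2$, and $-sa(a\varphi_x)_x z_x^2$. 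Summing the divergence contributions reproduces the $[\,\cdot\,]_x\,dt$ bracket in \eqref{nequa3.2}, summing the $z^2$ coefficients (together with the $-\tfrac12 A_t$ from Itô) gives exactly $\{s(Aa\varphi_x)_x-sA(a\varphi_x)_x+\tfrac12[sa(a\varphi_x)_{xx}]_x-\tfrac12 A_t\}$, and summing the $z_x^2$ coefficients gives $-\{sa[(a\varphi_x)_x+a\varphi_{xx}]\}$.

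The calculation is pointwise, so no boundary contributions or regularity issues arise; the only real obstacle is the careful bookkeeping of which pieces come from Itô (generating the $A_t z^2$ and $(dz)^2,(dz_x)^2$ terms) versus which come from the spatial integrations by parts. Since $y$ is $H_a^2$-valued and $\varphi$ is smooth up to the boundary away from the degeneracy, all derivatives $(a\varphi_x)_x$, $(a\varphi_x)_{xx}$ and $\varphi_{xx}$ are well-defined in the interior, which justifies the manipulations pointwise in $(t,x)$ and almost surely; one then reads off the identity \eqref{nequa3.2} directly.
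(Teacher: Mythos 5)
Your derivation is correct: the substitution $z=e^{-s\varphi}y$, the decomposition $e^{-s\varphi}[dy-(ay_x)_xdt]=dz+[Az-(az_x)_x]dt+[-2sa\varphi_xz_x-s(a\varphi_x)_xz]dt$, and the four sub-products of the second cross term assemble exactly into the claimed divergence bracket, the $z^2$ coefficient (including the $-\tfrac12A_t$ from It\^o) and the $z_x^2$ coefficient $-sa[(a\varphi_x)_x+a\varphi_{xx}]$. The paper does not prove this lemma itself but cites \cite{liuyu}, where the proof is the same pointwise weighted-identity computation, so your argument is essentially the intended one.
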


Let $y$ be any solution of \eqref{nequa3.1} and notice that $e^{-s\varphi}(0,x)=e^{-s\varphi}(T,x)=0$ in $[0,1]$. Integrating \eqref{nequa3.2} on $Q_T$ and taking expectation, one obtains that:
\begin{align*}
	 &\E\int_{Q_T} e^{-s\varphi}\left[A z-(az_x)_x\right]\left[dy-(ay_x)_x dt\right]dx\\
	 &=\E\int_{Q_T}\left[A z-(az_x)_x\right]^2 dxdt -\frac12\E\int_{Q_T}Ae^{-2s\varphi}g^2dx dt -\frac12\E\int_{Q_T} a [e^{-s\varphi}g]_x^2 dxdt\\
	 &+\E\int_{Q_T}\left[sa(a\varphi_x)_xz_x z-sAa\varphi_x z^2-\frac12 s a(a\varphi_x)_{x x}z^2+sa^2\varphi_x z_x^2\right]_x dxdt\\
	 & -\E\int_{Q_T}(az_xdz)_xdxdt \numberthis\label{nequa3.3}\\
	 &+\E\int_{Q_T}\left\{s(A a \varphi_x)_x-sA(a\varphi_x)_x+\frac12[sa(a\varphi_x)_{x x}]_x-\frac12A_t\right\}z^2 dxdt\\
	 &-\E\int_{Q_T}\left\{s a[(a\varphi_x)_x+a\varphi_{x x}]\right\}z_x^2 dxdt.
\end{align*}
To estimate terms on the right of the expression \eqref{nequa3.3}, let us write:
\begin{align*}
	 &Q_1= -\frac12\E\int_{Q_T}Ae^{-2s\varphi}g^2dx dt,\quad Q_2= -\frac12\E\int_{Q_T} a [e^{-s\varphi}g]_x^2dx dt,\quad Q_3=-\E\int_{Q_T}(az_xdz)_xdxdt,\\
	 &Q_4=\E\int_{Q_T}\left[sa(a\varphi_x)_xz_x z-sAa\varphi_x z^2-\frac12 s a(a\varphi_x)_{x x}z^2+sa^2\varphi_x z_x^2\right]_x dx dt,\\
	 &Q_5=\E\int_{Q_T}\left\{s(A a \varphi_x)_x-sA(a\varphi_x)_x+\frac12[sa(a\varphi_x)_{x x}]_x-\frac12A_t\right\}z^2 dxdt,\\
	 &Q_6=-\E\int_{Q_T}\left\{s a[(a\varphi_x)_x+a\varphi_{x x}]\right\}z_x^2 dxdt.
\end{align*}
This leads to the following lemmas:
\begin{lemma}\label{lem3.3}
	$$Q_1\geqslant -C\E\int_{Q_T} s^2\theta^2 e^{-2s\varphi}g^2dx dt.$$
\end{lemma}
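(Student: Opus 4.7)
The plan is to obtain the inequality by a direct pointwise bound on the coefficient $A = s\varphi_t - s^2 a \varphi_x^2$ inside $Q_1$, then pull the factor through the expectation.

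First I would substitute $\varphi(t,x)=\theta(t)\beta(x)$ so that $A = s\dot\theta(t)\beta(x) - s^2\theta(t)^2\,a(x)\beta_x(x)^2$, and bound each term separately by $C s^2 \theta^2$. For the first term, Lemma~\ref{rem3.5} gives $|\dot\theta|\leq c_2\theta^2$, and $\beta$ is bounded on $[0,1]$ (since $\phi$ is defined by the finite integral $\int_0^x v/a(v)\,dv$ — finite because $x^2/a(x)$ is non-decreasing, hence bounded — and $\psi$ is manifestly bounded). Thus $s|\dot\theta||\beta|\leq C s\theta^2 \leq C s^2\theta^2$ provided $s\geq 1$.

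The second term requires bounding $a(x)\beta_x(x)^2$ uniformly in $x\in(0,1)$. On $[b_1,1]$ the cutoff $\xi$ vanishes so $\beta=\psi$, and on this set $a$ stays away from zero and $\psi$ is smooth, so $a\beta_x^2$ is bounded. On $[0,a_1]$ we have $\xi\equiv 1$ so $\beta=\phi$, giving $\beta_x = -x/a(x)$ and
\[
a(x)\beta_x(x)^2 = \frac{x^2}{a(x)},
\]
which is non-decreasing on $(0,1]$ by hypothesis, hence bounded by $1/a(1)$. On the transition region $[a_1,b_1]$ everything is smooth with $a>0$, so $a\beta_x^2$ is again bounded. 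Combining the three regions, $\|a\beta_x^2\|_{L^\infty(0,1)}\leq C$, and so $s^2\theta^2 a\beta_x^2 \leq C s^2\theta^2$.

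Putting these together gives the pointwise estimate $|A(t,x)|\leq C s^2\theta(t)^2$ on $Q_T$ (for $s\geq 1$, the only regime of interest). Therefore
\[
-Q_1 \;=\; \tfrac{1}{2}\E\int_{Q_T} A\,e^{-2s\varphi} g^2\,dx\,dt \;\leq\; \tfrac{1}{2}\E\int_{Q_T} |A|\,e^{-2s\varphi} g^2\,dx\,dt \;\leq\; C\,\E\int_{Q_T} s^2\theta^2\,e^{-2s\varphi} g^2\,dx\,dt,
\]
which is exactly the claim. The only genuinely delicate step is the bound $a\beta_x^2\in L^\infty(0,1)$ near the degeneracy point, and this is precisely where the structural assumption that $x\mapsto x^2/a(x)$ be non-decreasing is used; the rest is bookkeeping via Lemma~\ref{rem3.5}.
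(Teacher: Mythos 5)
Your argument is correct and is essentially the paper's proof, which consists of the single observation $A=s\dot\theta\beta-s^2a\theta^2\beta_x^2\le Cs^2\theta^2$ (only the upper bound on $A$ is needed, since $e^{-2s\varphi}g^2\ge 0$); you simply supply the region-by-region verification, with the key point being $a\beta_x^2=x^2/a\le 1/a(1)$ near the degeneracy. One parenthetical is wrong, though: boundedness of $x^2/a(x)$ does \emph{not} by itself make $\int_0^x v/a(v)\,dv$ finite (it only gives $v/a(v)\le C/v$, and indeed $a(x)=x^2$ would be a counterexample); the finiteness of $\phi$ and $\rho$ instead follows from $xa'\le Ka$ with $K<2$, which gives $a(v)\ge a(1)v^{K}$ and hence $v/a(v)\le v^{1-K}/a(1)$ — but this is part of the paper's standing construction of the weights rather than something your lemma must re-establish.
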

\begin{proof}
	Indeed,
	$\ds A=s\varphi_t-s^2a\varphi_x^2=s\dot{\theta}\beta-s^2a\theta^2\beta_x^2\leqslant C  s^2\theta^2$.	 
\end{proof}

\begin{lemma}\label{lem3.4}
	$$Q_2= -\frac12\E\int_{Q_T} a [e^{-s\varphi}g]_x^2 dx dt\geqslant -C\E\int_{Q_T} \left( s^2 \theta^2\frac{x^2}{a} e^{-2s\varphi}g^2 +a e^{-2s\varphi}g_x^2 \right) dx dt.$$	
\end{lemma}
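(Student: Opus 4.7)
The plan is to expand the derivative inside the square and then use a Cauchy--Schwarz type splitting. Since $z=e^{-s\varphi}g$ appears only through the spatial derivative, I would first compute
\begin{equation*}
\bigl[e^{-s\varphi}g\bigr]_x = -s\varphi_x\, e^{-s\varphi}g + e^{-s\varphi}g_x,
\end{equation*}
so that
\begin{equation*}
\bigl[e^{-s\varphi}g\bigr]_x^{\,2} \le 2s^{2}\varphi_x^{2}\,e^{-2s\varphi}g^{2} + 2\,e^{-2s\varphi}g_x^{2},
\end{equation*}
by the elementary inequality $(\alpha+\beta)^2\le 2\alpha^2+2\beta^2$. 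Multiplying by $a$, integrating, taking expectation, and inserting the factor $-\tfrac12$ would then reduce the lemma to showing that $a\,\varphi_x^{2}\le C\,\theta^{2}\,x^{2}/a$ on $(0,1)$.

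The core point is thus the pointwise bound on $a\varphi_x^{2}=a\theta^{2}\beta_x^{2}$. I would split according to the cutoff $\xi$ used to define $\beta=\xi\phi+(1-\xi)\psi$:
\begin{itemize}
\item On $[0,a_1]$ one has $\xi\equiv 1$, hence $\beta_x=\phi_x=-x/a(x)$ and therefore $a\beta_x^{2}=x^{2}/a$.
\item On $[b_1,1]$ one has $\xi\equiv 0$, hence $\beta_x=\psi_x=(x/a(x))\,e^{\rho(x)}$, and since $\rho$ is bounded one gets $a\beta_x^{2}\le C\,x^{2}/a$.
\item On the transition region $[a_1,b_1]$ the coefficient $a$ is bounded away from $0$ and $\beta_x$ is bounded (both $\phi,\psi,\xi,\xi'$ are smooth there), while $x^{2}/a$ is bounded below by a positive constant, so once again $a\beta_x^{2}\le C\,x^{2}/a$.
\end{itemize}
Combining the three cases gives the uniform estimate $a\varphi_x^{2}\le C\,\theta^{2}\,x^{2}/a$ on $(0,1)$.

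Plugging this into the expansion above yields
\begin{equation*}
\tfrac12\,a\,\bigl[e^{-s\varphi}g\bigr]_x^{\,2} \le C\,s^{2}\theta^{2}\tfrac{x^{2}}{a}\,e^{-2s\varphi}g^{2} + C\,a\,e^{-2s\varphi}g_x^{2},
\end{equation*}
and integrating on $Q_T$ against $\P$ and multiplying by $-1$ produces exactly the claimed lower bound for $Q_2$. The only delicate point is the transition region $[a_1,b_1]$, where one must observe that all quantities involved are smooth and $a$ stays away from zero, so the desired inequality becomes a comparison between two continuous positive quantities on a compact interval; everywhere else the computation reduces directly to $-x/a(x)$ up to a harmless bounded factor.
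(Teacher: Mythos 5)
Your proof is correct and follows essentially the same route as the paper's: expand $[e^{-s\varphi}g]_x$, apply the elementary inequality $(\alpha+\beta)^2\le 2\alpha^2+2\beta^2$, and reduce to the pointwise bound $a\varphi_x^2\le C\theta^2 x^2/a$ on $(0,1)$. The paper states this last bound more tersely (only checking $[0,a_1]$ explicitly and asserting the extension to $[0,1]$), whereas you carry out the three-region verification in full; the content is the same.
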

\begin{proof}
	Since $[e^{-s\varphi}g]_x=-s\varphi_x e^{-s\varphi}g +e^{-s\varphi}g_x$, then 
	$$\ds Q_2= -\frac12\E \int_{Q_T} a [e^{-s\varphi}g]_x^2 dx dt\geqslant -C\E\int_{Q_T} \left( a s^2\varphi_x^2 e^{-2s\varphi}g^2 +a e^{-2s\varphi}g_x^2 \right)dx dt.$$
	On the interval $[0,a_1]$ we have $\ds a\varphi_x^2=\theta^2\frac{x^2}{a}$, then we can bound $\ds a\varphi_x^2$ over $[0,1]$ by $\ds C\theta^2\frac{x^2}{a}$.	
\end{proof}

\begin{lemma}\label{lem3.5}
	$$Q_3=0.$$	
\end{lemma}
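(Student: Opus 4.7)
The quantity $Q_3 = -\E\int_{Q_T}(az_x\, dz)_x\, dx\, dt$ is the integral of a pure spatial divergence, so the plan is simply to compute the boundary contributions at $x=0$ and $x=1$ and verify that each one vanishes, separately in the two degeneracy regimes. Concretely, I would use Fubini to pull the $dx$ integral inside, apply the fundamental theorem of calculus to obtain
\begin{equation*}
Q_3 = -\E\int_0^T \bigl[\, a(x)\, z_x(t,x)\, dz(t,x)\,\bigr]_{x=0}^{x=1},
\end{equation*}
and then examine the boundary values of $z$ and $az_x$. Since $z = e^{-s\varphi} y$ and $\varphi$ is smooth and bounded in $x$ uniformly on $[0,1]$, the boundary behaviour of $z$ and $az_x$ is inherited from that of $y$ and $a y_x$ through the boundary condition $Cy=0$.

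At $x=1$, in both the (WD) and (SD) regimes the condition $Cy=0$ includes $y(t,1)=0$, hence $z(t,1)=0$ for all $t$; the stochastic differential $dz(t,1)$ is then identically zero, so the contribution at $x=1$ vanishes. At $x=0$, I would split into the two cases. In the (WD) case, $C$ is the trace operator so $y(t,0)=0$, giving $z(t,0)=0$ and $dz(t,0)=0$; moreover $a(0)=0$ absorbs any remaining factor, so the term is zero. In the (SD) case, $C$ is the associated Neumann-type condition, meaning $(ay_x)(t,0)=0$ for the solution; translating through $z=e^{-s\varphi}y$ and using $a(0)=0$ once more yields $(az_x)(t,0)=0$, which again kills the product $az_x\, dz$ at $x=0$.

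Putting these two boundary evaluations together gives $Q_3=0$, as claimed. I do not foresee a genuine obstacle here: the only subtle point is making sure the trace of $az_x$ at $x=0$ in the strong degenerate case is well defined, which is exactly encoded in the definition of the space $H_a^2$ recalled in Section~2 (where $(au_x)(0)=0$ is built into the space in the (SD) case). Once this regularity is invoked, the computation is immediate and the identity is proved.
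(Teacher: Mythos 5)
Your proof is correct and takes essentially the same route as the paper: reduce $Q_3$ to the boundary terms at $x=0$ and $x=1$, kill them via $dz=0$ wherever the Dirichlet condition gives $z=0$, and via $(az_x)(t,0)=0$ at the degenerate endpoint in the (SD) case. The one minor imprecision is your justification of $(az_x)(t,0)=0$: it does not follow from $a(0)=0$ alone (note $\varphi_x=-\theta x/a$ may blow up as $x\to0$), but from the cancellation $a\varphi_x=-\theta x$ on $[0,a_1]$, so that $az_x = s\theta x e^{-s\varphi}y + e^{-s\varphi}\,a y_x$ vanishes at $x=0$ thanks to the explicit factor $x$ together with the Neumann condition $(ay_x)(t,0)=0$ --- which is exactly the computation carried out in the paper.
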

\begin{proof}
	For smooth $H_a^2$-valued semimartingales,
	\[
	Q_3=-\E\int_0^T\left[az_x\,dz\right]_{x=0}^{x=1}.
	\]
	A Dirichlet trace that vanishes for every $t$ is the identically zero
	semimartingale; hence its stochastic differential also vanishes. In the
	weakly degenerate case, $z(t,0)=z(t,1)=0$ therefore gives
	$dz(t,0)=dz(t,1)=0$, and both boundary contributions vanish.

	In the strongly degenerate case, the same argument gives $dz(t,1)=0$.
	At the degenerate endpoint, using $z=e^{-s\varphi}y$ and
	$\varphi_x=-\theta x/a$ on $[0,a_1]$, we have
	\[
	az_x=s\theta x e^{-s\varphi}y+e^{-s\varphi}ay_x.
	\]
	Since $(ay_x)(t,0)=0$, both terms on the right vanish at $x=0$.
	Thus the contribution at $x=0$ vanishes as well, and $Q_3=0$ in both
	regimes. The identity for solutions with the regularity used below follows
	by the same approximation argument as for the weighted identity
	\eqref{nequa3.3}.
\end{proof}

\begin{lemma}\label{lem3.6}
	$$Q_4\geqslant 0.$$	
\end{lemma}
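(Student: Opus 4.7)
The plan is to reduce $Q_4$ to boundary terms via Fubini and the fundamental theorem of calculus, then to show that all four boundary contributions vanish at $x=0$ (by a combination of the boundary conditions on $z$ and the degeneracy of $a$) while at $x=1$ exactly one nonnegative term survives. So I would write
\[
Q_4 = \E\int_0^T\Bigl[\, sa(a\varphi_x)_x z_x z - sAa\varphi_x z^2 - \tfrac12 sa(a\varphi_x)_{xx}z^2 + sa^2\varphi_x z_x^2\,\Bigr]_{x=0}^{x=1}\,dt,
\]
and analyse the two endpoints separately.

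At $x=1$, since on the interval $[b_1,1]$ we have $\xi\equiv 0$ so $\beta(x)=\psi(x)=e^{2\|\rho\|_\infty}-e^{\rho(x)}$, and $\rho(1)=0$, a direct computation gives $\beta_x(1)=\frac{1}{a(1)}$, hence $a(1)\varphi_x(t,1)=\theta(t)$ and $a(1)^2\varphi_x(t,1)=\theta(t)a(1)>0$. In either degeneracy regime the boundary condition forces $y(t,1)=0$, so $z(t,1)=0$ and the first three bracketed summands vanish at $x=1$, leaving the single contribution $sa(1)\theta(t)\,z_x(t,1)^2\ge 0$.

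At $x=0$ I would use the explicit form $\beta(x)=\phi(x)$ on $[0,a_1]$, which gives $\beta_x(x)=-x/a(x)$, hence $a(x)\varphi_x=-\theta(t)x$ and $(a\varphi_x)_{xx}\equiv 0$ on $[0,a_1]$, and $a^2\varphi_x=-\theta(t)\,x\,a(x)$ which vanishes at $0$. In the (WD) case one has $z(t,0)=0$ (so the first three terms are automatically zero) and the surviving factor $sa^2\varphi_x|_{x=0}=0$ kills the last term, the integrability $\sqrt a\,z_x\in L^2$ guaranteeing that the pointwise evaluation is legitimate. In the (SD) case I rewrite three of the four summands so the factor $az_x$ appears explicitly: since $z_x=-s\varphi_x e^{-s\varphi}y+e^{-s\varphi}y_x$, we get $az_x|_{x=0}=-s(a\varphi_x)(0)e^{-s\varphi(0)}y(t,0)+e^{-s\varphi(0)}(ay_x)(t,0)=0$, using $a\varphi_x(0)=0$ and the Neumann condition $(ay_x)(t,0)=0$; the remaining term $sAa\varphi_x z^2$ vanishes because $a\varphi_x(0)=0$.

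Combining the two endpoint computations yields
\[
Q_4 \;=\; \E\int_0^T sa(1)\,\theta(t)\,z_x(t,1)^2\,dt \;\ge\; 0,
\]
which is the claim. The genuine obstacle, as usual with degenerate Carleman arguments, is the rigorous justification of the endpoint evaluation at $x=0$: one must ensure that the formal boundary traces really make sense despite the possible blow-up of $z_x$ at the degeneracy point. I would handle this by a standard approximation argument — work first with $H_a^2$ solutions where the weighted trace at $0$ of $az_x$ is well defined (by the definition of $H_a^2$ in the (SD) case, or by the Dirichlet condition in the (WD) case combined with $\sqrt a\,z_x\in L^2$) — and then extend by density, exactly as in the deterministic degenerate framework already invoked in the paper.
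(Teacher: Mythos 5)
Your proposal is correct and follows essentially the same route as the paper: reduce $Q_4$ to the boundary terms at $x=0$ and $x=1$, use the explicit forms $\varphi_x=-\theta x/a$ near $0$ and $\varphi_x=\theta\frac{x}{a}e^{\rho}$ near $1$ together with the Dirichlet/Neumann conditions, and keep only the nonnegative contribution $s\,a(1)\theta\,z_x(t,1)^2$ at $x=1$. Your additional care about the legitimacy of the trace evaluations at the degeneracy point $x=0$ (via $(az_x)(0)=0$ in the (SD) case and a density argument) addresses a point the paper passes over silently, but it does not change the argument.
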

\begin{proof}
	We notice that $\ds \varphi_x=\lambda\theta\frac{x}{a}e^{\lambda\rho(x)}$ for all $x$ in $[b_1,1]$ and $\ds \varphi_x=-\theta\frac{x}{a},\,\, (a\varphi_x)_{x x}=0$ for all $x$ in $[0,a_1]$. Then, in the weak degenerate case (WD) we have:
	\begin{align*}
		Q_4 &=\E\int_0^T\left[sa(a\varphi_x)_xz_x z-sAa\varphi_x z^2-\frac12 s a(a\varphi_x)_{x x}z^2+sa^2\varphi_x z_x^2\right]_{x=0}^{x=1} dt\\
		 &=\E\int_0^T\left[sa^2\varphi_x z_x^2\right]_{x=0}^{x=1} dt=\E\int_0^T s\lambda xa\theta e^{\lambda\rho(x)} z_x^2\big|_{x=1}dt \geqslant 0.	 
	\end{align*}
	Whereas in the strong degenerate case (SD), we get:
	\begin{align*}
		Q_4 &=\E\int_0^Tsa^2\varphi_x z_x^2\big|_{x=1} dt-
		\E\int_0^T -sAa\varphi_x z^2-\frac12 s a(a\varphi_x)_{x x}z^2+sa^2\varphi_x z_x^2\big|_{x=0} dt\\
		 &=\E\int_0^Ts\lambda a x\theta e^{\lambda\rho(x)} z_x^2\big|_{x=1} dt-
		\E\int_0^T sA\theta x  z^2-sa \theta x  z_x^2\big|_{x=0} dt.
	\end{align*}
	For smooth solutions, the contribution at $x=0$ vanishes because $x=0$ and
	$\ds x^2/a(x)\to0$ as $x\to0^+$ under assumptions (WD) and (SD); the general
	case follows by density. Therefore,
	\[
		Q_4=\E\int_0^Ts\lambda a x\theta e^{\lambda\rho(x)}
		z_x^2\big|_{x=1}dt\geqslant0.
	\]
\end{proof}

\begin{lemma}\label{lem3.7}
	There exist two positive constants $C^{\prime}$ and $C^{\prime\prime}$ such that for every choice of small scalar $\varepsilon >0$ we have:
	\begin{equation}\label{equlem3.7}
		Q_5\geqslant C^{\prime}\E\int_{Q_T} s^3 \theta^3 \frac{x^2}{a}z^2 dxdt 
		-C^{\prime\prime}\E\int_{0}^{T}\int_{[a_1,b_1]} s^3 \theta^3 \frac{x^2}{a}z^2 dxdt-\varepsilon\E\int_{Q_T} (s\theta a z_x^2+  s^{3}\theta^3 \frac{x^2}{a}  z^2) dxdt.
	\end{equation}
\end{lemma}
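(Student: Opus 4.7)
The strategy is to expand the coefficient of $z^2$ inside $Q_5$,
\[
\mathcal{P}:=s(A a\varphi_x)_x - sA(a\varphi_x)_x + \tfrac12[sa(a\varphi_x)_{xx}]_x - \tfrac12 A_t,
\]
by substituting $A=s\varphi_t-s^2 a\varphi_x^2$ and $\varphi=\theta(t)\beta(x)$, and to isolate the dominant $s^3\theta^3$ term. The two $s^2\varphi_t(a\varphi_x)_x$ contributions in the first two terms cancel, and using $(a^2\varphi_x^3)_x=2aa'\varphi_x^3+3a^2\varphi_x^2\varphi_{xx}$ and $(a\varphi_x)_x=a'\varphi_x+a\varphi_{xx}$ the top-order piece reduces to
\[
\mathcal{P}_{s^3}=-s^3\theta^3\bigl[aa'\beta_x^3+2a^2\beta_x^2\beta_{xx}\bigr].
\]
On $[0,a_1]$, where $\beta=\phi$, $\beta_x=-x/a$ and $\beta_{xx}=(xa'-a)/a^2$, a direct simplification gives $\mathcal{P}_{s^3}=s^3\theta^3\,x^2(2a-xa')/a^2\geq(2-K)\,s^3\theta^3\,x^2/a$ thanks to the (WD)/(SD) hypothesis $xa'\leq Ka$ with $K\in[0,2)$. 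On $[b_1,1]$, where $\beta=\psi$, $a$ is bounded below and $\psi_x=(x/a)e^\rho>0$ with $\rho$ bounded, an analogous but longer explicit computation yields a lower bound of the same shape $c\,s^3\theta^3\,x^2/a$. On the transition band $[a_1,b_1]$, where $\beta$ is a convex combination of $\phi$ and $\psi$, no pointwise sign control is available; the contribution there is bounded in absolute value by $C\,s^3\theta^3\,x^2/a$ and placed into the localized error term $-C''\E\int_0^T\int_{[a_1,b_1]}s^3\theta^3 x^2/a\,z^2$ on the right-hand side of \eqref{equlem3.7}.

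Next I would treat the subleading parts of $\mathcal{P}$ via Lemma \ref{rem3.5} and Young's inequality. The $O(s^2)$ terms $s^2\dot\theta\theta a\beta_x^2$ (from $s^2\varphi_{tx}a\varphi_x$) and $s^2 a\theta\dot\theta\beta_x^2$ (from $-\tfrac12 A_t$) are handled using $|\dot\theta|\leq c_4\theta^{3/2}$, producing $\varepsilon s^3\theta^3 x^2/a$ plus a lower-order $z^2$-term that is in turn absorbed by the leading positive part. The $O(s\theta)$ piece $\tfrac{s\theta}{2}[a(a\beta_x)_{xx}]_x z^2$ vanishes on $[0,a_1]$, since there $a\beta_x=-x$ is linear; on the remainder of the interval a single integration by parts --- whose boundary terms vanish because of the boundary condition $Cy=0$ at $x=1$ and because the coefficient of the interior term is zero at $x=0$ --- transforms it into a cross term which Young's inequality splits as $\varepsilon s\theta a z_x^2+C_\varepsilon(\cdot)z^2$, and this is precisely the origin of the gradient piece $\varepsilon s\theta a z_x^2$ appearing in \eqref{equlem3.7}. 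Finally, $-\tfrac12 s\ddot\theta\beta\,z^2$ is absorbed using $|\ddot\theta|\leq c_5\theta^2$. Summing all contributions yields the claimed inequality.

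The main technical obstacle I anticipate is the explicit lower bound on $\mathcal{P}_{s^3}$ on the non-degenerate interval $[b_1,1]$: with the exponential weight $\psi$, several competing terms arise in the expansion, and the required strict positivity must be extracted carefully from the formulas $\psi_x=(x/a)e^\rho$, $\psi_{xx}=e^\rho(a-xa'-x^2)/a^2$, the boundedness of $\rho$, and the uniform positivity of $a$ away from the origin. This step parallels the corresponding argument in \cite{liuyu} for the special case $a(x)=x^\alpha$ and extends to a general $a$ satisfying (WD)/(SD) by the same structural reasons.
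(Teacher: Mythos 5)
Your decomposition agrees with the paper's: after cancelling the two $sA(a\varphi_x)_x$ contributions, the leading piece is $-s^3(a\varphi_x^2)_x\,a\varphi_x=-s^3\theta^3\bigl[aa'\beta_x^3+2a^2\beta_x^2\beta_{xx}\bigr]$, your lower bound $(2-K)s^3\theta^3x^2/a$ on $[0,a_1]$ is exactly the paper's constant $C_1=2-K$, and the transition band $[a_1,b_1]$ is handled identically. The genuine gap is your last sentence about the time-derivative term. Saying that $-\tfrac12 s\varphi_{tt}z^2=-\tfrac12 s\ddot\theta\beta\,z^2$ is ``absorbed using $|\ddot\theta|\leq c_5\theta^2$'' leaves a term of size $s\theta^2z^2$ carrying \emph{no} spatial weight, and this cannot be absorbed pointwise into $\varepsilon s^3\theta^3\tfrac{x^2}{a}z^2$ near the degeneracy: the function $x\mapsto x^2/a$ may vanish at $x=0$ (e.g.\ $a=x^{1/2}$ gives $x^2/a=x^{3/2}$), so $s\theta^2\leq\varepsilon s^3\theta^3x^2/a$ fails on a neighbourhood of $0$ for every $s$. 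The paper resolves this with the Hardy--Poincar\'e inequality: it splits $s\theta^2z^2$ into $\tfrac12 s^{1/2}\theta\tfrac{a}{x^2}z^2+\tfrac12 s^{3/2}\theta^3\tfrac{x^2}{a}z^2$ and converts $\E\int\tfrac{a}{x^2}z^2$ into $\E\int az_x^2$; that is the true origin of the $\varepsilon s\theta az_x^2$ term in \eqref{equlem3.7}, not the integration by parts of $\tfrac12[sa(a\varphi_x)_{xx}]_x$ that you propose. Your treatment of that third-derivative term is workable but unnecessary: the paper simply notes that $a_x(a\varphi_x)_{xx}+a(a\varphi_x)_{xxx}$ vanishes on $[0,a_1]$ and is bounded on $[a_1,1]$, where $x^2/a$ is bounded below, so it is absorbed directly into $\varepsilon s^3\theta^3\tfrac{x^2}{a}z^2$ without generating any gradient term.

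A secondary caution on the step you defer: on $[b_1,1]$ the formulas you yourself record, $\psi_x=(x/a)e^{\rho}$ and $\psi_{xx}=e^{\rho}(a-xa'-x^2)/a^2$ (which follow from $\rho'=-x/a$ as $\rho$ is literally defined), give $(a\psi_x)_x+a\psi_{xx}=e^{\rho}(2a-xa'-2x^2)/a$, whose sign is not controlled by (WD)/(SD) alone; the paper's displayed identity \eqref{eq3.4lem37} carries the opposite sign pattern, consistent with $\rho'=+x/a$. So the ``analogous but longer explicit computation'' you postpone is precisely where the argument can break, and it must be carried out with the sign conventions for $\rho$ and $\psi$ fixed coherently before the positivity of $\mathcal{P}_{s^3}$ on $[b_1,1]$ can be claimed.
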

\begin{proof}
   Let $\varepsilon >0$ be fixed. Since $\ds A_t=s\varphi_{t t}-2s^2a\varphi_x\varphi_{x t}$ and $\ds A_x=s\varphi_{x t}-s^2(a\varphi_x^2)_x$, then:
	\begin{align*}
		 &s(A a \varphi_x)_x-sA(a\varphi_x)_x+\frac12[sa(a\varphi_x)_{x x}]_x-\frac12A_t \\
		 &= sA_x a \varphi_x+\frac12 sa_x(a\varphi_x)_{x x}+\frac12sa(a\varphi_x)_{x x x}-\frac12A_t\\
		 &=\frac12 sa_x(a\varphi_x)_{x x}+\frac12sa(a\varphi_x)_{x x x}+2s^2a\varphi_x\varphi_{x t}- s^3(a\varphi_x^2)_x a \varphi_x-\frac12 s\varphi_{t t}\\
		 &=\frac12 sa_x(a\varphi_x)_{x x}+\frac12sa(a\varphi_x)_{x x x}+2s^2a\varphi_x\varphi_{x t}- s^3 a \varphi_x^2 ((a\varphi_x)_x+a\varphi_{x x} )-\frac12 s\varphi_{t t}.
	\end{align*} 
	Hence, 
	\begin{align*}
		Q_5 &=\E\int_{Q_T}\left(\frac12 sa_x(a\varphi_x)_{x x}+\frac12sa(a\varphi_x)_{x x x}+2s^2a\varphi_x\varphi_{x t}-\frac12 s\varphi_{t t}\right)  z^2 dxdt  \\
		 &\qquad\qquad\quad-\E\int_{Q_T} s^3 a \varphi_x^2 ((a\varphi_x)_x+a\varphi_{x x} )z^2 dxdt =J_1+J_2.
	\end{align*}
	\emph{Estimate of $J_1$.}
	For the first integral $J_1$ we have:
	\begin{align*}
		J_1 &=\E\int_{Q_T}\frac12 s\left(a_x(a\varphi_x)_{x x}+a(a\varphi_x)_{x x x}\right) z^2 dxdt+\E\int_{Q_T}2s^2a\varphi_x\varphi_{x t} z^2 dxdt-\E\int_{Q_T}\frac12 s\varphi_{t t}  z^2 dx dt  \\
		 &=J_1^1+J_1^2+J_1^3.
	\end{align*}
	A direct computation shows that $\left\{ a_x(a\varphi_x)_{x x}+a(a\varphi_x)_{x x x}\right\}=0$ on the interval $[0,a_1]$. On the other hand, since the functions $\ds x\mapsto\frac{a}{x^2}$ and $\ds\frac{x^2}{a}$ do not vanish on $[a_1,1]$, we can bound $J_1^1$ as follows:
	\begin{align*}
		|J_1^1| &\leqslant \frac12\E\int_{Q_T}s \left| a_x(a\varphi_x)_{x x}+a(a\varphi_x)_{x x x}\right|  z^2 dxdt\\
		 &\leqslant  C\E \int_{0}^{T}\int_{[a_1,1]} s \theta   z^2 dxdt\\
		 &\leqslant  C\E \int_{0}^{T}\int_{[a_1,1]} s \theta  \frac{x^2}{a} z^2 dxdt\\
		 &\leqslant  \varepsilon \E \int_{Q_T} s^3 \theta^3  \frac{x^2}{a} z^2 dxdt,\numberthis\label{J11}
	\end{align*}
	for $s$ large enough.\\ 
	To estimate $J_1^2$, notice that: \begin{align*}
		|J_1^2| &\leqslant 2 \E\int_{Q_T}s^2a\theta|\dot{\theta}|\beta_x^2 z^2 dxdt\\
		 &\leqslant C \E \int_{0}^{T}\int_{[0,a_1]} s^2 \theta^{3}\frac{x^2}{a} z^2 dxdt+C \E \int_{0}^{T}\int_{[a_1,1]} s^2a\theta^{3}\beta_x^2 z^2 dxdt\\
		 &\leqslant C \E \int_{0}^{T}\int_{[0,a_1]} s^2 \theta^{3}\frac{x^2}{a} z^2 dxdt+C \E \int_{0}^{T}\int_{[a_1,1]} s^2\theta^{3} z^2 dxdt.
	\end{align*}
	Once again, since the functions $\ds x\mapsto\frac{a}{x^2}$ and $\ds\frac{x^2}{a}$ do not vanish on $[a_1,1]$, for a suitable constant we have:
	$$|J_1^2|\leqslant C \E \int_{0}^{T}\int_{[0,a_1]} s^2 \theta^{3}\frac{x^2}{a} z^2 dxdt+C \E \int_{0}^{T}\int_{[a_1,1]} s^2\theta^{3}\frac{x^2}{a} z^2 dxdt.$$
	Thus, for $s$ sufficiently large we obtain:
	\begin{equation}\label{J12}
		|J_1^2|\leqslant \varepsilon\E\int_{Q_T}  s^{3}\theta^3 \frac{x^2}{a}  z^2 dxdt.
	\end{equation} 
	We next estimate $J_1^3$. Using Lemma \ref{rem3.5} and Hardy-Poincar\'e inequality, we obtain:
	\begin{align*}
		|J_1^3| &\leqslant C\E\int_{Q_T} s\theta^2  z^2 dxdt	\\
		 &\leqslant  C\E\int_{Q_T} (s^{\frac14}\theta^{\frac12}\frac{\sqrt{a}}{x}  z)(s^{\frac34}\theta^{\frac32} \frac{x}{\sqrt{a}} z) dxdt \\
		 &\leqslant  C\E\int_{Q_T} s^{\frac12}\theta \frac{a}{x^2} z^2dxdt+ C\E\int_{Q_T} s^{\frac32}\theta^3 \frac{x^2}{a}  z^2 dxdt \\
		 &\leqslant  C\E\int_{Q_T} s^{\frac12}\theta a z_x^2dxdt+ C\E\int_{Q_T} s^{\frac32}\theta^3 \frac{x^2}{a}  z^2 dxdt .
	\end{align*}
	Thus, for $s$ sufficiently large we have:
	\begin{equation}\label{J13}
		|J_1^3|\leqslant \varepsilon\E\int_{Q_T} (s\theta a z_x^2+  s^{3}\theta^3 \frac{x^2}{a}  z^2) dxdt.
	\end{equation}
	Combining \eqref{J11}--\eqref{J13} yields the required lower-order
	estimate for $J_1$ by Hardy--Poincar\'e, Young's inequality, and absorption
	for $s$ sufficiently large.

	\emph{Estimate of $J_2$.}
	For the integral $J_2$, note that:
	\begin{align*}
		(a\varphi_x)_x+a\varphi_{x x} &=\mathbbm{1}_{[0,a_1]} \theta [(a\phi^{\prime})^{\prime}+a\phi^{\prime\prime}]+\mathbbm{1}_{[b_1,1]} \theta [(a\psi^{\prime})^{\prime}+a\psi^{\prime\prime}]+\mathbbm{1}_{[a_1,b_1]}\theta k(x)\\
		 &=- \theta \frac{2a-x a^{\prime}}{a}\mathbbm{1}_{[0,a_1]}-\lambda\theta q_\lambda(x)e^{\lambda\rho(x)}\mathbbm{1}_{[b_1,1]}+\mathbbm{1}_{[a_1,b_1]}\theta k(x),\numberthis\label{eq3.4lem37}
	\end{align*}	
	where
	\[
		q_\lambda(x)=\frac{2\lambda x^2-2a(x)+xa'(x)}{a(x)}.
	\]
	As $[b_1,1]\Subset(0,1)$, one may choose $\lambda$ sufficiently large so that $q_\lambda$ is positive on $[b_1,1]$.
	Here $k$ denotes a smooth bounded function on $[a_1,b_1]$ depending on $\xi$, $\phi$, and $\psi$, and more explicitly, for $x\in [a_1,b_1]$,
	\begin{align*}
		k(x)={}&a'(x)\big[\xi'(x)(\phi(x)-\psi(x))+\xi(x)\phi'(x)+(1-\xi(x))\psi'(x)\big]\\
		&+2a(x)\big[\xi''(x)(\phi(x)-\psi(x))+2\xi'(x)(\phi'(x)-\psi'(x))
		+\xi(x)\phi''(x)+(1-\xi(x))\psi''(x)\big].
	\end{align*}
	Since $[a_1,b_1]\Subset(0,1)$ and $\xi,\phi,\psi$ are smooth there, $k$ is bounded on $[a_1,b_1]$. Moreover, the function
	\[
		\widetilde{k}(x)=\frac{a(x)^2\beta_x(x)^2}{x^2}k(x),
		\qquad x\in[a_1,b_1],
	\]
	is bounded on $[a_1,b_1]$. Thus,
	\begin{align*}
		J_2  &=-\E\int_{Q_T} s^3 a \varphi_x^2
		\left\{ - \theta \frac{2a-x a^{\prime}}{a}\mathbbm{1}_{[0,a_1]}
		-\lambda\theta q_\lambda(x)e^{\lambda\rho(x)}\mathbbm{1}_{[b_1,1]}\right.\\
		 &\hspace{5.5cm}\left.+\mathbbm{1}_{[a_1,b_1]}\theta k(x) \right\}z^2 dxdt \\
		 &=\E\int_{0}^{T}\int_{[0,a_1]} s^3 \theta^3 \frac{x^2}{a}
		\left(\frac{2a-x a^{\prime}}{a}\right)z^2 dxdt\\
		 &\quad + \E\int_{0}^{T}\int_{[b_1,1]} s^3 \theta^3 \frac{x^2}{a}
		\lambda^3q_\lambda(x)e^{3\lambda\rho(x)} z^2 dxdt \\
		 &\quad-\E\int_{0}^{T}\int_{[a_1,b_1]} s^3 \theta^3 \frac{x^2}{a} \widetilde{k}(x)z^2 dxdt \\
		 &\geqslant C_1\E\int_{0}^{T}\int_{[0,a_1]} s^3 \theta^3 \frac{x^2}{a}z^2 dxdt
		+ C_2\E\int_{0}^{T}\int_{[b_1,1]} s^3 \theta^3 \frac{x^2}{a} z^2 dxdt \\
		 &\quad-C_3\E\int_{0}^{T}\int_{[a_1,b_1]} s^3 \theta^3 \frac{x^2}{a}z^2 dxdt,
	\end{align*}
	with $\ds C_2=\min_{x\in [b_1,1]}\left(\lambda^3q_\lambda(x)e^{3\lambda\rho(x)}\right)$,
	$\ds C_3=\max_{x\in [a_1,b_1]}|\widetilde{k}(x)|$ and $C_1=2-K$ ($K$ subject to the assumptions (WD) and (SD)). Thus, we infer that:
	\begin{equation}\label{J2}
		 J_2\geqslant C^{\prime}\E\int_{Q_T} s^3 \theta^3 \frac{x^2}{a}z^2 dxdt 
		-C^{\prime\prime}\E\int_{0}^{T}\int_{[a_1,b_1]} s^3 \theta^3 \frac{x^2}{a}z^2 dxdt,
	\end{equation}
	where $C^{\prime}=\min(C_1,C_2)$ and $C^{\prime\prime}=C^{\prime}+C_3$.
	Estimate \eqref{equlem3.7} follows at once from \eqref{J11}-\eqref{J13} and \eqref{J2}.
\end{proof}

\begin{lemma}\label{lem3.8}
	There exist two positive constants $C$ and $C_1$ such that:
	$$Q_6\geqslant C\E\int_{Q_T}s a\theta z_x^2 dxdt -C_1\E\int_{0}^{T}\int_{[a_1,b_1]}  s a\theta z_x^2 dxdt.$$	
\end{lemma}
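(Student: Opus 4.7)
The starting point is the formula for $(a\varphi_x)_x + a\varphi_{xx}$ established in the proof of Lemma \ref{lem3.7}, namely equation \eqref{eq3.4lem37}, which decomposes the coefficient into three pieces supported respectively on $[0,a_1]$, $[b_1,1]$, and $[a_1,b_1]$. The plan is to substitute this directly into the definition of $Q_6$, which produces
\begin{align*}
Q_6 &= \E\int_0^T\!\!\int_{[0,a_1]} sa\theta\,\frac{2a-xa'}{a}\,z_x^2\,dx\,dt
+ \E\int_0^T\!\!\int_{[b_1,1]} sa\theta\Bigl[\frac{2a-xa'}{a}+\frac{2x^2}{a}\Bigr]e^{\rho(x)}z_x^2\,dx\,dt \\
&\qquad - \E\int_0^T\!\!\int_{[a_1,b_1]} sa\theta\,k(x)\,z_x^2\,dx\,dt,
\end{align*}
so the proof reduces to bounding these three pieces separately.

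For the first piece, I would invoke the structural assumption $xa'(x)\leq Ka(x)$ from (WD) or (SD) with $K<2$, yielding $(2a-xa')/a\geq 2-K>0$, hence a pointwise lower bound by $(2-K)\,sa\theta\,z_x^2$ on $[0,a_1]$. For the second piece, on $[b_1,1]$ the diffusion $a$ is strictly positive and smooth, so $\bigl[(2a-xa')/a+2x^2/a\bigr]e^{\rho(x)}$ is continuous and strictly positive on the compact interval $[b_1,1]$; taking its minimum gives a positive constant $C_2$ and a lower bound $C_2\,sa\theta\,z_x^2$. For the third piece, since $k$ is bounded on $[a_1,b_1]$ with $C_3=\max_{[a_1,b_1]}|k(x)|$, this term is bounded below by $-C_3\E\int_0^T\!\int_{[a_1,b_1]} sa\theta\,z_x^2\,dx\,dt$.

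To assemble the final estimate, I set $C=\min(2-K,\,C_2)$ so that the positive contributions from $[0,a_1]$ and $[b_1,1]$ dominate $C\,sa\theta\,z_x^2$ on $[0,a_1]\cup[b_1,1]$. Adding and subtracting the same quantity on the localizing interval $[a_1,b_1]$ converts this into a lower bound on all of $Q_T$ with a compensating negative term on $[a_1,b_1]$:
\begin{equation*}
Q_6 \;\geqslant\; C\,\E\int_{Q_T} sa\theta\,z_x^2\,dx\,dt \;-\; (C+C_3)\,\E\int_0^T\!\!\int_{[a_1,b_1]} sa\theta\,z_x^2\,dx\,dt,
\end{equation*}
which is the claimed inequality with $C_1=C+C_3$. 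The only delicate point is verifying strict positivity of the coefficient on $[b_1,1]$, but this follows from compactness together with the fact that $a>0$ there; no further largeness of $s$ is needed since all terms are already of the right order $sa\theta$.
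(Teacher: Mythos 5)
Your proposal is correct and follows essentially the same route as the paper: substitute the decomposition \eqref{eq3.4lem37} into $Q_6$, bound the $[0,a_1]$ piece below by $(2-K)\,sa\theta z_x^2$ using $xa'\leqslant Ka$ with $K<2$, bound the $[b_1,1]$ piece by compactness and strict positivity of $a$ there, and absorb the bounded $k$-term on $[a_1,b_1]$ into the negative localized integral. In fact you make explicit the add-and-subtract step on $[a_1,b_1]$ that the paper leaves implicit in its final inequality, so nothing is missing.
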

\begin{proof}
	Arguing as in the previous lemma and using \eqref{eq3.4lem37}, we obtain:
	\begin{align*}
		Q_6 &=-\E\int_{Q_T}\left\{s a[(a\varphi_x)_x+a\varphi_{x x}]\right\}z_x^2 dx dt\\
		 &=-\E\int_{Q_T}\left\{- s a \theta \frac{2a-x a^{\prime}}{a}\mathbbm{1}_{[0,a_1]}- s a\lambda\theta q_\lambda(x)e^{\lambda\rho(x)}\mathbbm{1}_{[b_1,1]}+\mathbbm{1}_{[a_1,b_1]}s a \theta k(x)\right\}z_x^2 dx dt \\
		 &=\E\int_{0}^{T}\int_{[0,a_1]}s a\theta \frac{2a-x a^{\prime}}{a}z_x^2 dx dt+\E\int_{0}^{T}\int_{[b_1,1]}s a\lambda\theta q_\lambda(x)e^{\lambda\rho(x)}z_x^2 dx dt\\
		 &\qquad\qquad-\E \int_{0}^{T}\int_{[a_1,b_1]} s a\theta k(x)z_x^2 dx dt \\
		 &\geqslant C\E\int_{Q_T}s a\theta z_x^2 dxdt   -C_1\E\int_{0}^{T}\int_{[a_1,b_1]} s a\theta z_x^2 dxdt.
	\end{align*} 
\end{proof}

\begin{proposition}\label{propo3.9}
	There exist two positive constants $C$ and $s_0$ such that, for all $y_0\in L^2(0,1)$, the solution $y$ of \eqref{nequa3.1} satisfies:
	\begin{align*}
		 & \E\int_{Q_T}s a\theta y_x^2 e^{-2s\varphi} dxdt+\E\int_{Q_T} s^3 \theta^3 \frac{x^2}{a}y^2e^{-2s\varphi} dxdt \\
		 &\leqslant C\E\int_{Q_T} \left( f^2 e^{-2s\varphi}+s^2\theta^2 \frac{x^2}{a}e^{-2s\varphi}g^2+a e^{-2s\varphi}g_x^2 \right) dxdt 
		+ C\E\int_{0}^{T}\int_{[a,b]} s^3 \theta^3 y^2e^{-2s\varphi} dx dt,\numberthis\label{equprop3.9}
	\end{align*}
	for all $s >s_0$. 
\end{proposition}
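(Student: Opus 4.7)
The plan is to apply the pointwise weighted identity of Lemma \ref{lem3.2} to any solution $y$ of \eqref{nequa3.1}, integrate on $Q_T$, and take expectation. Because $dy-(ay_x)_x dt = f\,dt+g\,dW$, the Itô term $\int g\,dW$ has zero mean, while the quadratic variations $(dz)^2=e^{-2s\varphi}g^2\,dt$ and $(dz_x)^2=[e^{-s\varphi}g]_x^2\,dt$ produce precisely the two correction terms $Q_1$ and $Q_2$ written in \eqref{nequa3.3}. The left-hand side of \eqref{nequa3.3} then reduces to $\E\int_{Q_T} e^{-s\varphi}[Az-(az_x)_x]\,f\,dx\,dt$, which I would split by Young's inequality,
\begin{equation*}
\Bigl|\E\int_{Q_T} e^{-s\varphi}[Az-(az_x)_x]\,f\,dx\,dt\Bigr|\leq \tfrac{1}{2}\E\int_{Q_T}[Az-(az_x)_x]^2 dx\,dt + \tfrac{1}{2}\E\int_{Q_T} e^{-2s\varphi}f^2\,dx\,dt,
\end{equation*}
so that half of the $[Az-(az_x)_x]^2$ mass already present on the right of \eqref{nequa3.3} absorbs the $f$-contribution.

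Next, I would insert the lower bounds from Lemmas \ref{lem3.3}--\ref{lem3.8}. Lemmas \ref{lem3.5}--\ref{lem3.6} give $Q_3=0$ and $Q_4\geq 0$ at no cost; Lemmas \ref{lem3.3}--\ref{lem3.4} contribute the $g$ and $g_x$ error terms on the right; and Lemmas \ref{lem3.7}--\ref{lem3.8} produce the dominant positive quantities $\E\int_{Q_T} s^3\theta^3\frac{x^2}{a}z^2$ and $\E\int_{Q_T} s a\theta z_x^2$, together with localized remainders on $[a_1,b_1]$ and the three small perturbations $\varepsilon\,\E\int s\theta a z_x^2$ and $\varepsilon\,\E\int s^3\theta^3\frac{x^2}{a}z^2$. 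For $\varepsilon$ sufficiently small and $s\geq s_0$ large, these perturbations absorb into the left-hand side, yielding
\begin{equation*}
\E\int_{Q_T}\!\!\Bigl(s a\theta z_x^2+s^3\theta^3 \tfrac{x^2}{a}z^2\Bigr)dx\,dt\leq C\,\E\int_{Q_T}\!\!\Bigl(e^{-2s\varphi}f^2+s^2\theta^2\tfrac{x^2}{a}e^{-2s\varphi}g^2+a\,e^{-2s\varphi}g_x^2\Bigr)dx\,dt + C\,\E\int_0^T\!\!\int_{a_1}^{b_1}\!\!\Bigl(s^3\theta^3 \tfrac{x^2}{a}z^2+s a\theta z_x^2\Bigr)dx\,dt.
\end{equation*}

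Finally I would return from $z$ to $y$ by using $z=e^{-s\varphi}y$ and the elementary bound $e^{-2s\varphi}y_x^2\leq 2 z_x^2+2 s^2\varphi_x^2 z^2$, combined with $a\varphi_x^2\leq C\theta^2\frac{x^2}{a}$ (already exploited in Lemma \ref{lem3.4}), to rewrite the left-hand side in the form displayed in \eqref{equprop3.9}. The local $z^2$-integral on $[a_1,b_1]$ is harmless: since $\frac{x^2}{a}$ is bounded on the compact set $[a_1,b_1]\subset(0,1)$ and $[a_1,b_1]\subset[a,b]=\omega$, it is dominated by $C\,\E\int_0^T\!\int_{[a,b]} s^3\theta^3 y^2 e^{-2s\varphi}dx\,dt$. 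The main obstacle is the remaining local $z_x^2$-term, which has to be traded for a purely $L^2(y)$-quantity on the slightly larger observation set $[a,b]$; this is exactly the content of the Caccioppoli-type inequality whose proof is deferred to the appendix, and once that inequality is invoked, estimate \eqref{equprop3.9} follows for all $s\geq s_0$.
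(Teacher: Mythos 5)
Your proposal is correct and follows essentially the same route as the paper: integrate the weighted identity of Lemma \ref{lem3.2}, absorb the source term via Young's inequality into the $[Az-(az_x)_x]^2$ mass, invoke Lemmas \ref{lem3.3}--\ref{lem3.8}, absorb the $\varepsilon$-perturbations for $s$ large, and finish with Caccioppoli's inequality and the substitution $z=e^{-s\varphi}y$. Your write-up merely makes explicit a few steps (the origin of $Q_1,Q_2$ from the quadratic variations and the Young-inequality absorption of $f$) that the paper leaves implicit.
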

\begin{proof}
	By the formula \eqref{nequa3.3} and Lemmas \ref{lem3.3}-\ref{lem3.8}, for any solution $y$ of \eqref{nequa3.1}, one has the following estimate:
	\begin{align*}
		\E\int_{Q_T}s a\theta z_x^2 dxdt &+\E\int_{Q_T} s^3 \theta^3 \frac{x^2}{a}z^2 dxdt 
		\leqslant \E\int_{Q_T} e^{-s\varphi}\left[A z-(az_x)_x\right]\left[dy-(ay_x)_x dt\right]dx\\
		 &+C\E\int_{Q_T} s^2\theta^2 \frac{x^2}{a} e^{-2s\varphi}g^2dx dt + C\E\int_{Q_T} a e^{-2s\varphi}g_x^2 dx dt\\
		 &\qquad + C\E\int_{0}^{T}\int_{[a_1,b_1]} s^3 \theta^3 \frac{x^2}{a}z^2 dxdt
		+C\E\int_{0}^{T}\int_{[a_1,b_1]} s a\theta z_x^2 dxdt.
	\end{align*}
	Hence,
	\begin{multline*} 
		\E\int_{Q_T}s a\theta z_x^2 dxdt+\E\int_{Q_T} s^3 \theta^3 \frac{x^2}{a}z^2 dxdt 
		\leqslant C\E\int_{Q_T} f^2 e^{-2s\varphi}+s^2\theta^2 \frac{x^2}{a} e^{-2s\varphi}g^2+a e^{-2s\varphi}g_x^2 dxdt \\
		+ C\E\int_{0}^{T}\int_{[a_1,b_1]} s^3 \theta^3 \frac{x^2}{a}z^2+s a\theta z_x^2 dxdt.
	\end{multline*}
	Using Caccioppoli's inequality, together with $z=e^{-s\varphi}y$ and $z_x=-s\varphi_x e^{-s\varphi}y+e^{-s\varphi}y_x$, we obtain the desired estimate.
\end{proof}

We now state the global Carleman estimate associated with equation \eqref{equ:3.16}.

\begin{theorem}\label{theorem3.10}
	There exist two positive constants $C$ and $s_0$ such that, for all $y_0\in L^2(0,1)$ the solution $y$ of \eqref{equ:3.16} satisfies:
	\begin{equation}\label{equacoro3.10}
		\E\int_{Q_T}s a\theta y_x^2 e^{-2s\varphi} dxdt+\E\int_{Q_T} s^3 \theta^3 \frac{x^2}{a}y^2e^{-2s\varphi} dxdt 
		\leqslant  C\E\int_{0}^{T}\int_{\mathscr{O}} s^3 \theta^3 y^2e^{-2s\varphi} dxdt,
	\end{equation}
	for all $s >s_0$. 
\end{theorem}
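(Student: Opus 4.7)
The strategy is to deduce Theorem~\ref{theorem3.10} from Proposition~\ref{propo3.9} by specializing the abstract source $(f,g)$ to the semilinear data of \eqref{equ:3.16}. Equation \eqref{equ:3.16} fits the framework of \eqref{nequa3.1} with $f = by$ and $g = cy$, so applying Proposition~\ref{propo3.9} reproduces the left-hand side of \eqref{equacoro3.10} and bounds it by the localized term $C\,\E\int_0^T\!\int_\omega s^3\theta^3 y^2 e^{-2s\varphi}\,dx\,dt$ plus four volume nuisances coming from $b^2 y^2$, $s^2\theta^2\frac{x^2}{a} c^2 y^2$, $a c_x^2 y^2$ and $a c^2 y_x^2$. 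Using the $L^{\infty}$ bounds on $b$, $c$, $c_x$ (assumption (3) of Section~2), I would replace these coefficients by constants.

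Two of the four nuisance integrals are absorbed for free. Since $\theta \geq c_1 > 0$, for $s$ large enough the term $\E\int s^2\theta^2 \frac{x^2}{a} y^2 e^{-2s\varphi}$ is swallowed by $\E\int s^3\theta^3 \frac{x^2}{a} y^2 e^{-2s\varphi}$, and similarly $\E\int a y_x^2 e^{-2s\varphi}$ is swallowed by $\E\int s a\theta y_x^2 e^{-2s\varphi}$. Since $a$ is bounded on $[0,1]$, the remaining two nuisances reduce to controlling a single integral, $\E\int_{Q_T} y^2 e^{-2s\varphi}\,dx\,dt$, by the left-hand side of \eqref{equacoro3.10}.

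This last absorption is the main obstacle, because the weight $\frac{x^2}{a}$ vanishes at $x=0$, so $y^2$ cannot be pointwise dominated by $\frac{x^2}{a} y^2$. My plan is to invoke a degenerate Hardy--Poincar\'e inequality of the form $\int_0^1 \frac{a}{x^2} u^2\,dx \leq C_H \int_0^1 a u_x^2\,dx$, valid for $u \in H_a^1$ in both the (WD) and (SD) settings, applied at each time $t$ to $u = e^{-s\varphi(t,\cdot)} y(t,\cdot)$, which inherits the correct boundary trace at $x=1$ (and, in the (SD) case, the Neumann-type condition $(au_x)(0)=0$, since $a\varphi_x = -\theta x$ vanishes at $0$). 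Expanding $u_x = -s\varphi_x e^{-s\varphi} y + e^{-s\varphi} y_x$ and using the pointwise bound $a\varphi_x^2 \leq C\theta^2 \frac{x^2}{a}$ already exploited in Lemma~\ref{lem3.4}, one obtains
\begin{equation*}
\int_0^1 \frac{a}{x^2} y^2 e^{-2s\varphi}\,dx \leq C s^2\theta^2 \int_0^1 \frac{x^2}{a} y^2 e^{-2s\varphi}\,dx + C \int_0^1 a y_x^2 e^{-2s\varphi}\,dx.
\end{equation*}
Finally, since $\frac{x^2}{a}$ is non-decreasing on $(0,1]$, we have $\frac{a(x)}{x^2} \geq a(1) > 0$ there, hence $y^2 \leq \frac{1}{a(1)} \frac{a}{x^2} y^2$. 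The integral $\E\int y^2 e^{-2s\varphi}$ is therefore controlled by the two pieces already absorbed, and choosing $s_0$ large enough for all absorptions to operate simultaneously yields \eqref{equacoro3.10}.
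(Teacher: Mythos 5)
Your proposal is correct and follows essentially the same route as the paper: apply Proposition \ref{propo3.9} to \eqref{equ:3.16} viewed as \eqref{nequa3.1} with $f=by$, $g=cy$, and absorb the resulting volume terms into the left-hand side of \eqref{equacoro3.10} for $s$ large. The one place you diverge, the paper merely asserts that $\E\int_{Q_T} y^2 e^{-2s\varphi}\,dx\,dt$ is dominated by $\E\int_{Q_T} s^2\theta^2\frac{x^2}{a}\,y^2 e^{-2s\varphi}\,dx\,dt$ for $s$ large, which cannot hold pointwise since $\frac{x^2}{a}$ vanishes at $x=0$; your detour through the Hardy--Poincar\'e inequality (mirroring the paper's own treatment of $J_1^3$ in Lemma \ref{lem3.7}), which produces both the zeroth-order and the gradient absorption terms, is the correct way to justify that step, so your write-up is in fact more careful than the paper's.
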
 
\begin{proof}
	We consider a particular case of equation \eqref{nequa3.1} with $f(t,x)=b y(t,x)$ and $g(t,x)=c(t,x)y(t,x)$. By applying Proposition \ref{propo3.9} to equation \eqref{equ:3.16}, and using
	$$g_x=c_x y+c y_x,\qquad a g_x^2\leqslant C a y_x^2+C a y^2,$$
	we get:
	\begin{multline}
		\E\int_{Q_T}s a\theta y_x^2 e^{-2s\varphi} dxdt
		+\E\int_{Q_T} s^3 \theta^3 \frac{x^2}{a}y^2e^{-2s\varphi} dxdt \\
		\leqslant C\E\int_{Q_T} \left(y^2+s^2\theta^2 \frac{x^2}{a}y^2+a y^2+a y_x^2\right)e^{-2s\varphi} dxdt 
		+ C\E\int_{0}^{T}\int_{\mathscr{O}} s^3 \theta^3 y^2e^{-2s\varphi} dxdt.
		\label{haadi}
	\end{multline}
	for all $s >s_0$.\\
		The term $\ds s^2\theta^2 \frac{x^2}{a}y^2$ is absorbed by $\ds s^3 \theta^3 \frac{x^2}{a}y^2$ for $s$ large, and $\ds a y_x^2$ is absorbed by $\ds s a\theta y_x^2$ since $\theta\geqslant c_1$. For the remaining terms $\ds y^2$ and $\ds ay^2$, set $\ds z=e^{-s\varphi}y$. Since $a$ is bounded on $[0,1]$, it is enough to estimate $\int_0^1 z^2dx$. For almost every fixed $t\in(0,T)$, Young's inequality followed by the Hardy-Poincar\'e inequality applied directly to $z$ gives
		\begin{align*}
			\int_0^1 z^2dx
			&\leqslant \varepsilon s\theta\int_0^1\frac{a}{x^2}z^2dx
			+C_\varepsilon (s\theta)^{-1}\int_0^1\frac{x^2}{a}z^2dx\\
			&\leqslant C\varepsilon s\theta\int_0^1az_x^2dx
			+C_\varepsilon (s\theta)^{-1}\int_0^1\frac{x^2}{a}z^2dx.
		\end{align*}
		Here
		\[
		z_x=e^{-s\varphi}y_x-s\varphi_xe^{-s\varphi}y,
		\qquad
		a z_x^2\leqslant 2ae^{-2s\varphi}y_x^2+2s^2a\varphi_x^2e^{-2s\varphi}y^2,
		\]
		and $\ds a\varphi_x^2\leqslant C\theta^2\frac{x^2}{a}$ on $(0,1)$. Consequently,
		\begin{align*}
			\int_0^1 y^2e^{-2s\varphi}dx
			&\leqslant C\varepsilon\int_0^1
			\left(s\theta ay_x^2+s^3\theta^3\frac{x^2}{a}y^2\right)e^{-2s\varphi}dx\\
			&\quad+C_\varepsilon(s\theta)^{-1}\int_0^1
			\frac{x^2}{a}y^2e^{-2s\varphi}dx.
		\end{align*}
		The same estimate holds for $\int_0^1ay^2e^{-2s\varphi}dx$, up to the factor $\|a\|_{L^\infty(0,1)}$. After integration in time and taking expectations, choose $\varepsilon>0$ sufficiently small. Since $\theta\geqslant c_1>0$, the last term is also absorbed by $\ds s^3\theta^3\frac{x^2}{a}y^2e^{-2s\varphi}$ when $s$ is sufficiently large. This proves \eqref{equacoro3.10}.
\end{proof}

\section{Controllability results}
We begin with the following statement, which is crucial for the proof of the unique continuation property and of the main theorem.
\begin{proposition}\label{Prop4.1}
	Under the same assumptions of Theorem \ref{theorem3.10}, there exist two positive constants $C$ and $s_0$ such that, for all $y_0\in L^2(0,1)$ the solution $y$ of \eqref{equ:3.16} satisfies:
	\begin{equation}
		\E \int_{0}^{1} y^2(T,x)dx \leqslant C \E\int_{0}^{T}\int_{\mathscr{O}} s^3\theta^3(t)y^2(t,x)e^{-2s\varphi}  dx dt, 
	\end{equation}
	for all $s\geq s_0$.	
\end{proposition}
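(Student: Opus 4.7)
The approach is to combine a standard stochastic energy estimate with the Carleman inequality of Theorem~\ref{theorem3.10}, localizing to the central time window $[T/4,3T/4]$ where the Carleman weight $e^{-2s\varphi}$ is uniformly bounded away from zero and infinity. Concretely, I would first propagate $\E\|y(T)\|_{L^2(0,1)}^2$ backwards in time via It\^o's formula and Gronwall, then reintroduce the weight $e^{-2s\varphi}$ at bounded cost on the central window, and finally invoke Theorem~\ref{theorem3.10} to replace the interior integral by the observation over $\omega$.

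Apply It\^o's formula to $t \mapsto \int_0^1 y^2(t,x)\,dx$ using $(dy)^2 = c^2 y^2\,dt$. After integration by parts in $x$, this yields
\begin{equation*}
d\int_0^1 y^2\,dx = \Bigl[-2\int_0^1 a y_x^2\,dx + \int_0^1 (2b+c^2)y^2\,dx\Bigr]dt + 2\Bigl(\int_0^1 cy^2\,dx\Bigr)dW,
\end{equation*}
the boundary term $[y\cdot a y_x]_0^1$ vanishing under $Cy=0$ in both the weakly degenerate case ($y(t,0)=y(t,1)=0$) and the strongly degenerate case ($y(t,1)=0$ and $(ay_x)(t,0)=0$). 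Taking expectation (after localization by stopping, if needed) and using $b,c\in L^\infty$ yields $\frac{d}{dt}\E\int_0^1 y^2\,dx \leq M\,\E\int_0^1 y^2\,dx$ with $M = 2\|b\|_\infty + \|c\|_\infty^2$. Gronwall then gives $\E\|y(T)\|_{L^2}^2 \leq e^{M(T-t)}\E\|y(t)\|_{L^2}^2$ for all $t\in[0,T]$, and averaging over $t\in[T/4,3T/4]$ produces
\begin{equation*}
\frac{T}{2}\,\E\|y(T)\|_{L^2}^2 \leq e^{MT}\int_{T/4}^{3T/4}\E\int_0^1 y^2\,dx\,dt.
\end{equation*}

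On $[T/4,3T/4]$, Lemma~\ref{rem3.5} and the construction of $\beta$ imply that $\theta$ and $\varphi$ are uniformly bounded, so there exists $m_s>0$ with $e^{-2s\varphi}\geq m_s$ and $\theta(t)\geq c_1$. Invoking the Hardy--Poincar\'e inequality $\int_0^1 u^2\,dx \leq K\int_0^1 a u_x^2\,dx$ for $u\in H_a^1$, which is valid in both (WD) and (SD) regimes under the standing hypotheses on $a$, applied pointwise in $t$ to $y(t,\cdot)$, one obtains
\begin{equation*}
\int_{T/4}^{3T/4}\E\int_0^1 y^2\,dx\,dt \leq \frac{K}{m_s}\int_{T/4}^{3T/4}\E\int_0^1 a y_x^2\,e^{-2s\varphi}\,dx\,dt \leq \frac{K}{sc_1 m_s}\,\E\int_{Q_T} s\theta a y_x^2\,e^{-2s\varphi}\,dx\,dt.
\end{equation*}
The last expression is part of the left-hand side of Theorem~\ref{theorem3.10}, which is itself bounded by $C\,\E\int_0^T\int_\omega s^3\theta^3 y^2 e^{-2s\varphi}\,dx\,dt$, completing the proof after absorbing all prefactors into a single constant.

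The principal technical obstacle lies in the Hardy--Poincar\'e inequality in the weighted space $H_a^1$ handling both (WD) and (SD) uniformly, together with the verification that the boundary terms in the It\^o integration-by-parts vanish under each realization of the operator $C$; both points are standard in the degenerate parabolic literature once $C$ is properly interpreted. Beyond these ingredients, the passage from the interior Carleman estimate to the terminal-time observability bound is routine.
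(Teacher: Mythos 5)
Your argument is correct and follows essentially the same route as the paper: an energy/Gronwall estimate propagating $\E\|y(T)\|_{L^2(0,1)}^2$ backward in time, reinsertion of the Carleman weight by integrating in $t$, a Hardy--Poincar\'e inequality to pass to the gradient term, and finally Theorem~\ref{theorem3.10}. The only cosmetic difference is that you localize to the central window $[T/4,3T/4]$ where $e^{-2s\varphi}$ is bounded below, whereas the paper integrates over all of $[0,T]$ against the lower bound $\theta e^{-s\varphi}\geq (2/T)^8 e^{-sD\theta}$ with $D=\max\beta$; both routes give the stated inequality (in either case with a constant that is not uniform in $s$, a feature shared with the paper's own proof).
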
 
\begin{proof}
		It\^o's formula and integration by parts give, for almost every $\ell$,
		\[
		\frac{d}{d\ell}\E\|y(\ell)\|_{L^2(0,1)}^2
		=-2\E\int_0^1a y_x^2\,dx+2\E\int_0^1b y^2\,dx
		+\E\int_0^1c^2y^2\,dx
		\leqslant C\E\|y(\ell)\|_{L^2(0,1)}^2,
		\]
		where one may take $C=2\|b\|_{\infty}+\|c\|_{\infty}^2$ after replacing $b$ by its positive part. Hence, for $0\leqslant\tau\leqslant t\leqslant T$,
		$$\E \int_{0}^{1} y^2(t,x)dx \leqslant \E \int_{0}^{1} y^2(\tau,x)dx +C\E \int_{\tau}^{t} \int_{0}^{1} y^2(\ell,x)dx d\ell.$$ 
		Thanks to Gronwall's lemma, we get:
		$$ \E \int_{0}^{1} y^2(t,x)dx \leqslant e^{C(t-\tau)}\E \int_{0}^{1} y^2(\tau,x)dx \leqslant e^{CT}\E \int_{0}^{1} y^2(\tau,x)dx.$$
		Notice that $\ds \theta(\tau)e^{-s \varphi(\tau,x)}\geqslant (\frac2T)^8e^{-s D \theta(\tau)}$ for all $(\tau,x)\in [0,T]\times(0,1),$ where $D=\max_{x\in [0,1]}\beta(x)$.\\
		Hence, integrating on $[0,T]$ and using Theorem \ref{theorem3.10}, we obtain:
		\begin{align*}
			\ds\left(\int_{0}^{T}\left(\frac2T\right)^{16}e^{-2s D \theta(t)} dt\right) &\left(\E \int_{0}^{1} y^2(T,x)dx\right) \leqslant e^{CT}\E\int_{0}^{T} \int_{0}^{1} e^{-2s\varphi}\theta^2y^2(t,x)dx dt \\ 
		     & \leqslant e^{CT}\E\int_{0}^{T} \int_{0}^{1} e^{-2s\varphi}(s\theta)^2 y^2(t,x)dx dt  \\	
			 & \leqslant e^{CT}\E\int_{0}^{T} \int_{0}^{1} \left(e^{-s\varphi}(s\theta)^{\frac12} \frac{\sqrt{a}}{x}y(t,x)\right)\left(e^{-s\varphi} (s\theta)^{\frac32}\frac{x}{\sqrt{a}} y(t,x)  \right)dxdt \\
			 & \leqslant \frac{e^{CT}}{2}\E\int_{0}^{T} \int_{0}^{1} \left( e^{-2s\varphi}(s\theta) \frac{a}{x^2}y^2(t,x)+e^{-2s\varphi} (s\theta)^{3}\frac{x^2}{a} y^2(t,x) \right)dx dt \\ 
			 & \leqslant C_1 e^{CT}\E\int_{0}^{T} \int_{0}^{1}\left( e^{-2s\varphi} s\theta a y_x^2(t,x)+e^{-2s\varphi} (s\theta)^{3}\frac{x^2}{a} y^2(t,x)\right)dx dt \\
		 & \leqslant 
		\tilde{C}\E\int_0^T\int_{\mathscr{O}} s^3\theta^3(t)y^2(t,x) e^{-2s\varphi} dx dt,
	\end{align*}
	for a suitable constant $\tilde{C}$. Indeed, the penultimate inequality follows by applying Hardy-Poincar\'e to $\ds z=e^{-s\varphi}y$:
	\begin{align*}
		\int_0^1 s\theta\frac{a}{x^2}z^2dx
		&\leqslant Cs\theta\int_0^1az_x^2dx\\
		&\leqslant C\int_0^1\left(s\theta ae^{-2s\varphi}y_x^2+s^3\theta^3\frac{x^2}{a}e^{-2s\varphi}y^2\right)dx,
	\end{align*}
	where $\ds z_x=e^{-s\varphi}y_x-s\varphi_xe^{-s\varphi}y$ and $\ds a\varphi_x^2\leqslant C\theta^2\frac{x^2}{a}$.
\end{proof}
Proposition \ref{Prop4.1} therefore yields the unique continuation property for the forward stochastic degenerate equation \eqref{equ:3.16}.

\begin{corollary}
	Let $y$ be a solution of \eqref{equ:3.16} satisfying $y(t,x)=0$ $\P$-a.s., for all $t$ in a right neighborhood of $0$ and almost every $x\in\mathscr{O}$. Then, $y(t,x)=0$ $\P$-a.s., for all $t\in [0,T]$ and almost every $x\in (0,1)$. 
\end{corollary}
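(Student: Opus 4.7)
The plan is as follows. Let $t_0>0$ be such that $y(t,x)=0$ $\P$-a.s.\ for all $t\in[0,t_0]$ and a.e.\ $x\in\omega$; the goal is to show that $y$ vanishes on all of $[0,T]\times(0,1)$ a.s. The idea is to combine the observability inequality of Proposition~\ref{Prop4.1} with the (forward) uniqueness of the Cauchy problem for \eqref{equ:3.16}, but applied over \emph{shrinking} time horizons.

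First, I would apply Proposition~\ref{Prop4.1} to $y$ viewed as a solution of \eqref{equ:3.16} on each shorter interval $[0,t]$ with $t\in(0,t_0]$. Since $\beta(x)$ is time-independent and $\theta$ depends on the horizon only through the factor $1/(r^{4}(t-r)^{4})$, the whole chain Lemmas~\ref{lem3.2}--\ref{lem3.8} $\Rightarrow$ Theorem~\ref{theorem3.10} $\Rightarrow$ Proposition~\ref{Prop4.1} goes through verbatim with $T$ replaced by $t$: all the structural inequalities of Lemma~\ref{rem3.5} depend only on the shape of $\theta$ near the two endpoints, not on the length of the interval. The observability inequality then reads
$$
\E\int_{0}^{1}y^{2}(t,x)\,dx \;\leqslant\; C\,\E\int_{0}^{t}\!\int_{\omega}s^{3}\theta^{3}(r)\,y^{2}(r,x)\,e^{-2s\varphi(r,x)}\,dx\,dr,
$$
and since $y\equiv 0$ on $[0,t_0]\times\omega$ while $t\leqslant t_0$, the right-hand side vanishes. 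Hence $y(t,\cdot)=0$ in $L^{2}(0,1)$ $\P$-a.s.\ for every $t\in(0,t_0]$.

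Taking $t=t_0$ in particular, $y(t_0,\cdot)=0$ $\P$-a.s. Restarting \eqref{equ:3.16} at time $t_0$ with this zero Cauchy datum, the well-posedness/uniqueness statement recalled for \eqref{FSHE1} gives $y(t,\cdot)=0$ $\P$-a.s.\ for every $t\in[t_0,T]$. Combining with the previous paragraph, $y(t,\cdot)=0$ $\P$-a.s.\ for every $t\in(0,T]$; by the continuity $y\in L^{2}_{\mathcal{F}}(\Omega;\mathcal{C}([0,T];L^{2}(0,1)))$ the identity extends to $t=0$ as well, which is the desired conclusion.

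The only delicate point is the horizon-change in Step~1: one must justify that Proposition~\ref{Prop4.1}, whose proof manifestly uses the weights $\theta$ and $\varphi$ attached to the fixed horizon $T$, remains valid when $T$ is replaced by an arbitrary $t\in(0,t_0]$. This is essentially bookkeeping — redefine $\theta_t(r)=1/(r^{4}(t-r)^{4})$ and $\varphi_t(r,x)=\theta_t(r)\beta(x)$, and observe that the bounds of Lemma~\ref{rem3.5} and all subsequent integrations by parts in the proofs of Lemmas~\ref{lem3.3}--\ref{lem3.8} use only the blow-up of $\theta_t$ at the two endpoints — so no further obstruction appears and the corollary follows.
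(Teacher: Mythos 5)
Your argument is correct, and it is worth noting that it supplies more than the paper does: the paper states the corollary with no proof at all, merely calling it ``an immediate consequence of Proposition~\ref{Prop4.1}.'' You are right that the consequence is not literally immediate, since the right-hand side of the observability inequality in Proposition~\ref{Prop4.1} integrates $y^2$ over all of $(0,T)\times\omega$, whereas the hypothesis only kills $y$ on $(0,t_0)\times\omega$. Your two-step repair --- (i) rerun the Carleman/observability machinery on the shrunken horizon $[0,t]$, $t\leqslant t_0$, where the weight $\theta_t(r)=1/(r^4(t-r)^4)$ satisfies the same structural bounds of Lemma~\ref{rem3.5} with $t$-dependent constants that are harmless because the right-hand side vanishes anyway, hence $y(t,\cdot)=0$ for $t\in(0,t_0]$; (ii) propagate forward from the zero datum at $t_0$ by the uniqueness recalled for \eqref{FSHE1} --- is exactly what is needed, and the final appeal to the continuity of $t\mapsto y(t,\cdot)$ in $L^2(0,1)$ to recover $t=0$ is also correct. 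The only point you should make explicit is that in step (ii) the restarted problem has an $\mathcal{F}_{t_0}$-measurable initial datum, so the well-posedness statement must be invoked with initial time $t_0$ rather than $0$; this is routine but is part of the bookkeeping. In short, the proposal is a valid proof of a statement the paper leaves unproved, and it correctly identifies why the paper's ``immediate'' is an overstatement.
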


In order to give the proof of Theorem \ref{meantheorem}, we will need the following well-known functional analysis lemma (see \cite[Theorem 2.2, p.~208]{Zab}).
\begin{lemma}\label{Douglas}
	Let $X,Y,Z$ be three Hilbert spaces, $X^{*},Y^{*},Z^{*}$ their dual spaces and $F\in\mathcal{L}(X,Z)$, $G\in\mathcal{L}(Y,Z)$. Assume that $Y$ is separable. Then $Range(F)\subset Range(G)$ if and only if there exists a constant $C >0$ such that:
	\begin{equation*}
		\|F^{*}z\|_{X^{*}}\leqslant C \|G^{*}z\|_{Y^{*}},\quad  z \in Z^{*},
	\end{equation*} 
	where $F^{*}$ and $G^{*}$ are the adjoint operators.
\end{lemma}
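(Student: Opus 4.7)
The result is the classical Douglas range inclusion lemma, and the plan is to prove both implications by passing between operators and their adjoints, exploiting throughout the fact that Hilbert spaces are reflexive and that $Z^{*}$ may be identified with $Z$. I would split the argument into the two implications and treat them separately.

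For the direction ``range inclusion $\Rightarrow$ adjoint norm inequality'', I would form the quotient Hilbert space $Y/\ker(G)$ and the induced injective operator $\widetilde{G}\in\mathcal{L}(Y/\ker(G),Z)$ with $G=\widetilde{G}\circ\pi$, where $\pi\colon Y\to Y/\ker(G)$ is the canonical projection. Since $Fx\in\mathrm{Range}(G)$ for every $x\in X$, injectivity of $\widetilde{G}$ lets me define a linear map $T\colon X\to Y/\ker(G)$ unambiguously by $Tx=\pi(y)$ for any $y$ with $Gy=Fx$. A short verification using continuity of $F$ and $\widetilde{G}$ shows that the graph of $T$ is closed, so by the closed graph theorem there exists $C>0$ with $\|Tx\|_{Y/\ker(G)}\le C\|x\|_{X}$. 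Given any $z\in Z^{*}$ and any $x\in X$, I pick a representative $y\in Y$ of $Tx$ with $\|y\|_{Y}\le 2C\|x\|_{X}$ and compute
\begin{equation*}
|\langle F^{*}z,x\rangle_{X^{*},X}|=|\langle z,Fx\rangle_{Z^{*},Z}|=|\langle z,Gy\rangle_{Z^{*},Z}|=|\langle G^{*}z,y\rangle_{Y^{*},Y}|\le 2C\|x\|_{X}\|G^{*}z\|_{Y^{*}},
\end{equation*}
and taking the supremum over $\|x\|_{X}\le 1$ yields the claimed inequality with constant $2C$.

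For the converse, assuming $\|F^{*}z\|_{X^{*}}\le C\|G^{*}z\|_{Y^{*}}$ for all $z\in Z^{*}$, I would fix $x\in X$ and construct $y\in Y$ with $Gy=Fx$ by duality. The functional
\begin{equation*}
\ell_{x}\colon G^{*}(Z^{*})\longrightarrow\R,\qquad \ell_{x}(G^{*}z)=\langle x,F^{*}z\rangle_{X,X^{*}}
\end{equation*}
is well defined on the linear subspace $G^{*}(Z^{*})\subset Y^{*}$, because the hypothesis forces $G^{*}z_{1}=G^{*}z_{2}$ to imply $F^{*}z_{1}=F^{*}z_{2}$; moreover it is bounded, since $|\ell_{x}(G^{*}z)|\le\|x\|_{X}\|F^{*}z\|_{X^{*}}\le C\|x\|_{X}\|G^{*}z\|_{Y^{*}}$. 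I then extend $\ell_{x}$ by continuity to $\overline{G^{*}(Z^{*})}$ and by Hahn--Banach to a bounded linear functional on $Y^{*}$. Since $Y$ is a Hilbert space, hence reflexive, the Riesz--Fr\'echet representation provides $y\in Y$ with $\ell_{x}(\eta)=\langle y,\eta\rangle_{Y,Y^{*}}$ for all $\eta\in Y^{*}$. Specializing to $\eta=G^{*}z$ gives $\langle Gy-Fx,z\rangle_{Z,Z^{*}}=0$ for every $z\in Z^{*}$, so $Gy=Fx$ and $\mathrm{Range}(F)\subset\mathrm{Range}(G)$.

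The principal technical point is the $(\Leftarrow)$ direction: one must produce an actual element of $Y$ (not merely of $Y^{**}$) representing the extended functional, which is where both the separability of $Y$ (ensuring a clean Hahn--Banach extension) and the Hilbert--space identification $Y^{**}\simeq Y$ intervene. The $(\Rightarrow)$ direction is essentially the closed graph theorem applied to a quotient, the only subtlety being to check closedness of the graph of $T$, which follows from the continuity of $F$ and of the induced injection $\widetilde{G}$, together with completeness of $Y/\ker(G)$.
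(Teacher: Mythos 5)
Your proof is correct. Note, however, that the paper does not prove this lemma at all: it is quoted verbatim from the literature (the citation is to Zabczyk's \emph{Mathematical Control Theory}, Theorem 2.2, p.~208), so there is no in-paper argument to compare against. Your two-sided argument is the standard one for this Douglas-type range-inclusion result and both directions check out: in the forward direction the factorization $G=\widetilde G\circ\pi$ through $Y/\ker(G)$, the closed-graph argument for $T$ (closedness follows exactly as you say from continuity of $F$ and injectivity plus continuity of $\widetilde G$, and $Y/\ker(G)$ is complete since $\ker(G)$ is closed), and the choice of a representative within twice the quotient norm are all sound; in the converse direction the functional $\ell_x$ is well defined precisely because the hypothesis forces $G^{*}z=0\Rightarrow F^{*}z=0$, and the Hahn--Banach extension followed by Riesz representation produces the required preimage. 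One small remark: the separability of $Y$ plays no role in your Hilbert-space argument (Hahn--Banach and Riesz representation do not need it), so your closing comment attributing the need for separability to the extension step is not quite right --- that hypothesis matters in the general Banach-space version of the theorem, or when one wants a measurable/linear selection of preimages, but your proof goes through without it. This does not affect correctness, since separability is only an additional assumption in the statement.
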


\begin{proof}[Proof of Theorem \ref{meantheorem}]
	Consider the operators
	\begin{equation}
		\begin{array}{lcll}
			S_0\, : & L^2(\Omega,\mathcal{F}_T,L^2(0,1)) &\longrightarrow & L^2(0,1)\\  
			 &  \eta  & \longmapsto  & z^{0,\eta} (0,x),
		\end{array}
	\end{equation}
	where $z^{0,\eta}$ is the solution of \eqref{BSHE} with final datum $\eta$ and control $v=0$, and:
	\begin{equation}
		\begin{array}{lcll}
			L_0\, : & L_{\mathcal{F}}^2(\Omega,L^2(0,T;L^2(0,1))) &\longrightarrow & L^2(0,1)\\  
			 &  v  & \longmapsto  & z^{v,0} (0,x),
		\end{array}
	\end{equation}
	where $z^{v,0}$ is the solution of \eqref{BSHE} with control $v$ and final datum $\eta=0$. Hence, Theorem \ref{meantheorem} follows once $Range(S_0)\subset Range(L_0)$ is proved. Let $z$ be a solution of \eqref{BSHE}$,$ and let $y$ be a solution of \eqref{equ:3.16}. Applying It\^o's formula to $d_s\langle z(s),y(s)\rangle_{L^2(0,1)}$, integrating over $[0,T]$, and taking expectations, we obtain the following identity. The spatial boundary terms vanish: in the weakly degenerate case both traces are Dirichlet, while in the strongly degenerate case $(ay_x)(t,0)=(az_x)(t,0)=0$ and $y(t,1)=z(t,1)=0$.
	\begin{equation}
		\E\int_{0}^{1}\eta(x)y(T,x)dx-	\E\int_{0}^{1}z^{v,\eta}(0,x)y_0(x)dx=	\E\int_{0}^{T}\int_{\mathscr{O}}v(s,x)y(s,x)dx ds.
	\end{equation}
	Consequently,
	$$(S_0^{*}y_0)(x)=y(T,x) \,\,\text{ and } \,\,(L_0^{*}y_0)(t,x)=- \mathbbm{1}_{\mathscr{O}}y(t,x).$$
	Proposition \ref{Prop4.1} implies that there exists a constant $C >0$ such that:
	$$\|S_0^{*}y_0\|_{L^2(\Omega,\mathcal{F}_T,L^2(0,1))} \leqslant C \|L_0^{*}y_0\|_{L_{\mathcal{F}}^2(\Omega,L^2(0,T;L^2(0,1)))}.$$
	By Lemma \ref{Douglas}, we get $Range(S_0)\subset Range(L_0)$ which completes the proof.
\end{proof}

\section{A controllability result for a linear forward parabolic degenerate stochastic equation with two controls} 
\subsection{Carleman estimate for backward stochastic equation}
\begin{equation}\label{newtheta}
	\overline{\varphi}(t,x)=\bar{\theta}(t)\beta(x),\quad \text{ where } \quad
	\bar{\theta}(t)=\left(\frac{2}{T}\right)^8+\chi(t)\left(\frac{1}{t^4(T-t)^4}-\left(\frac{2}{T}\right)^8\right),
\end{equation}
with $\chi\in C^{\infty}([0,T])$, $0\leqslant \chi\leqslant 1$, $\chi\equiv 0$ on $[0,\frac{T}{2}]$ and $\chi\equiv 1$ on $[\frac{3T}{4},T]$.

\begin{equation}\label{nequa6.12}
	\begin{cases}
		\ds	dy=\left[ -\left(a(x) y_x\right)_x  +F \right]dt + \bar{y} dW(t),\,\,\text{ in } Q_T,\\
		C y=0,\qquad \text{ on } \Sigma_T,\\
		y(T,\cdot)=y_T(\cdot)\qquad \text{ in } (0,1).
	\end{cases}
\end{equation}
 	
\begin{align*}
	 &e^{-s\overline{\varphi}}\left[A z+(az_x)_x\right]\left[dy+(ay_x)_x dt\right]\\
	 &=\left[A z+(az_x)_x\right]^2 dt +d\left(\frac12 A z^2-\frac12az_x^2\right)-\frac12A(dz)^2+\frac12 a (dz_x)^2+(az_xdz)_x\\
	 &+\left[sa(a\overline{\varphi}_x)_xz_x z+sAa\overline{\varphi}_x z^2-\frac12 s a(a\overline{\varphi}_x)_{x x}z^2+sa^2\overline{\varphi}_x z_x^2\right]_x dt \numberthis\label{nequa6.13}\\
	 &+\left\{-s(A a \overline{\varphi}_x)_x+sA(a \overline{\varphi}_x)_x+\frac12[sa(a \overline{\varphi}_x)_{x x}]_x-\frac12A_t\right\}z^2 dt\\
	 &-\left\{s a[(a \overline{\varphi}_x)_x+a \overline{\varphi}_{x x}]\right\}z_x^2 dt,
\end{align*}
where $\ds A=s\overline{\varphi}_t+s^2a\overline{\varphi}_x^2$.

\begin{proposition}\label{Prop5.1}
	There exist two positive constants $C$ and $s_0$ such that, for any $y_T \in L^2(\Omega,\mathcal{F}_T, \P;L^2(0, 1))$ and any $F \in L_{\mathcal{F}}^2(0,T;L^2(0,1))$, the solution $(y,\bar{y})$ to \eqref{nequa6.12} satisfies:
	\begin{multline*}
		\E\int_{0}^{1}A(0)e^{-2s\overline{\varphi}(0)} y^2(0) dx+\E\int_{Q_T}s a \bar{\theta} y_x^2 e^{-2s\overline{\varphi}} dxdt+\E\int_{Q_T} s^3 \bar{\theta}^3 \frac{x^2}{a}y^2 e^{-2s\overline{\varphi}} dxdt \\
		\leqslant  C\E\int_{Q_T}  F^2 e^{-2s\overline{\varphi}} dxdt
		+C\E\int_{Q_T} s^2\bar{\theta}^2 e^{-2s\overline{\varphi}}\bar{y}^2dx dt
		+ C\E\int_{0}^{T}\int_{[a,b]} s^3 \bar{\theta}^3 e^{-2s\overline{\varphi}} y^2 dxdt,\numberthis\label{nequa6.14}
	\end{multline*}
	for all $s >s_0$.
\end{proposition}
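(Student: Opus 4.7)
The plan is to mirror the proof of Proposition \ref{propo3.9}, substituting the backward weighted identity \eqref{nequa6.13} for the forward identity of Lemma \ref{lem3.2}. Setting $z=e^{-s\bar\varphi}y$ with $(y,\bar y)$ solving \eqref{nequa6.12} and noting that $dy+(ay_x)_x\,dt=F\,dt+\bar y\,dW$, I would integrate \eqref{nequa6.13} over $Q_T$ and take expectations; the $dW$-integrals vanish, and the left-hand side reduces to
\begin{equation*}
\E\int_{Q_T}e^{-s\bar\varphi}\bigl[Az+(az_x)_x\bigr]F\,dx\,dt,
\end{equation*}
which a weighted Cauchy--Schwarz/Young step splits into a piece absorbable by $\tfrac12\E\int_{Q_T}[Az+(az_x)_x]^2\,dx\,dt$ and the weighted-$F$ term appearing on the right of \eqref{nequa6.14}.

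For the remaining contributions I would carry out the direct analogues of Lemmas \ref{lem3.3}--\ref{lem3.8}. The spatial boundary integrals from $[\,\cdot\,]_x$ and $(az_x dz)_x$ vanish under the (WD) or (SD) boundary conditions by the same case analysis as in Lemmas \ref{lem3.5}--\ref{lem3.6}. The algebraic identity \eqref{eq3.4lem37} is purely spatial, so the analogues of $Q_5$ and $Q_6$ (for the backward $A=s\bar\varphi_t+s^2 a\bar\varphi_x^2$) still produce the dominant lower bounds $Cs^3\bar\theta^3\tfrac{x^2}{a}z^2$ and $Csa\bar\theta z_x^2$ up to a localized observation on $(a_1,b_1)$. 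The quadratic variations yield
\begin{equation*}
(dz)^2=e^{-2s\bar\varphi}\bar y^2\,dt,\qquad (dz_x)^2=e^{-2s\bar\varphi}(\bar y_x-s\bar\varphi_x\bar y)^2\,dt,
\end{equation*}
and the term $-\tfrac12 A(dz)^2$, controlled as in Lemma \ref{lem3.3} via the pointwise bound $|A|\leq Cs^2\bar\theta^2$, produces precisely the $s^2\bar\theta^2 e^{-2s\bar\varphi}\bar y^2$ contribution on the right of \eqref{nequa6.14}.

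The genuinely new ingredient is the temporal boundary contribution $\E\int_0^1\bigl[\tfrac12 Az^2-\tfrac12 az_x^2\bigr]_0^T\,dx$. Because $\bar\theta(T)=+\infty$, both $z(T,\cdot)$ and $z_x(T,\cdot)$ vanish, so only the $t=0$ endpoint survives. The choice $\dot{\bar\theta}\equiv 0$ on $[0,T/2]$ makes $A(0,x)=s^2 a(x)\bar\varphi_x^2(0,x)\geq 0$ non-trivial, and after moving the resulting term across the equality the combination $\tfrac12\E\int_0^1 A(0)e^{-2s\bar\varphi(0)}y^2(0)\,dx$ is placed on the left, providing the new contribution of \eqref{nequa6.14}.

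The main obstacle I expect is twofold. First, the sign conventions of \eqref{nequa6.13} differ from those of \eqref{nequa3.2} in three places (the coefficients of $a(dz_x)^2$, $(az_xdz)_x$, and the first two summands of the $z^2\,dt$ bracket), so each step of Lemmas \ref{lem3.3}--\ref{lem3.8} must be re-derived; the decisive check is that the cubic contribution $-s^3 a\bar\varphi_x^2[(a\bar\varphi_x)_x+a\bar\varphi_{xx}]$ still carries the positive sign needed to drive the $Q_5$-analogue down to the desired $Cs^3\bar\theta^3\tfrac{x^2}{a}z^2$ lower bound. Second, the extra boundary piece $\tfrac12\E\int_0^1 az_x^2(0)\,dx$ and the non-negative Itô term $\tfrac12 a(dz_x)^2$ (which carries $\bar y_x$, absent from the right side of \eqref{nequa6.14}) must be disposed of; since both are non-negative, the correct handling is to move them to the good side of the rearranged inequality and drop them, weakening the identity to the asserted inequality. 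Once these points are settled, the $(a_1,b_1)$-localized contributions are replaced by an observation on $y$ over $\omega=(a,b)$ via the Caccioppoli inequality of the appendix, exactly as in the closing step of Proposition \ref{propo3.9}, yielding \eqref{nequa6.14}.
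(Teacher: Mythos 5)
Your proposal is correct and follows essentially the same route as the paper: integrate the backward weighted identity \eqref{nequa6.13} over $Q_T$, take expectations, estimate the resulting terms by the direct analogues of Lemmas \ref{lem3.3}--\ref{lem3.8} (with the quadratic-variation term $-\tfrac12 A(dz)^2$ producing the $s^2\bar{\theta}^2 e^{-2s\bar{\varphi}}\bar{y}^2$ contribution and the non-negative pieces simply dropped), and close with the Caccioppoli inequality exactly as in Proposition \ref{propo3.9}. The one point where your bookkeeping differs is the harvesting of the $t=0$ term: you read $\tfrac12\E\int_0^1 A(0)z^2(0)\,dx$ directly off the temporal boundary term $d\bigl(\tfrac12 Az^2-\tfrac12 az_x^2\bigr)$ using $z(T)=z_x(T)=0$ and $\dot{\bar{\theta}}\equiv 0$ on $[0,T/2]$, whereas the paper extracts it from $-\tfrac12\int_0^T A_t z^2\,dt$ by an integration by parts in time (its term $Q_8$); this is a cosmetic variation that lands on the same estimate \eqref{nequa6.14}.
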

\begin{proof}
	Let $z=e^{-s\overline{\varphi}}y$. For smooth data, identity \eqref{nequa6.13} holds pointwise. Integrating it over $Q_T$, taking expectations and using the boundary condition $Cy=0$ on $\Sigma_T$, we obtain
	\begin{align*}
		 &\E\int_{Q_T}\left[A z+(az_x)_x\right]^2dxdt+\E\int_{0}^{1}A(0)z^2(0)dx \\
		 &\quad +\E\int_{Q_T}\Bigl\{-s(A a \overline{\varphi}_x)_x+sA(a \overline{\varphi}_x)_x+\frac12[sa(a\overline{\varphi}_x)_{xx}]_x-\frac12A_t\Bigr\}z^2dxdt \\
		 &\quad -\E\int_{Q_T}s a\Bigl[(a\overline{\varphi}_x)_x+a\overline{\varphi}_{xx}\Bigr]z_x^2dxdt \\
		 &\leqslant C\E\int_{Q_T}s^2\bar{\theta}^2 e^{-2s\overline{\varphi}}\bar{y}^{\,2}dxdt
		+C\E\int_{Q_T}F^2e^{-2s\overline{\varphi}}dxdt
		+\mathcal{R}_{\mathrm{loc}},
	\end{align*}
	where $\mathcal{R}_{\mathrm{loc}}$ is supported in $[a_1,b_1]$ and comes from the localization of the sign-indefinite terms. More precisely, the spatial divergence terms in \eqref{nequa6.13} give no contribution on $\Sigma_T$, while the time differential term contributes only through the trace at $t=0$ since $z(T)=e^{-s\overline{\varphi}(T)}y_T$ is absorbed in the right-hand side.
	
	We next estimate separately the three lower-order pieces hidden in $\mathcal{R}_{\mathrm{loc}}$.
	First, by Young's inequality, for any $\delta >0$,
	\[
	\left|\E\int_{Q_T}sa(a\overline{\varphi}_x)_x z_xz\,dxdt\right|
	\leqslant \delta \E\int_{Q_T}sa\bar{\theta} z_x^2\,dxdt
	+C_{\delta}\E\int_0^T\int_{[a,b]} s^3\bar{\theta}^3\frac{x^2}{a}z^2\,dxdt.
	\]
	The two remaining localized terms satisfy
	\[
	\left|\E\int_{Q_T}sAa\overline{\varphi}_x z^2\,dxdt\right|
	+\left|\E\int_{Q_T}s a(a\overline{\varphi}_x)_{xx} z^2\,dxdt\right|
	\leqslant C\E\int_0^T\int_{[a,b]} s^3\bar{\theta}^3\frac{x^2}{a}z^2\,dxdt,
	\]
	because $\beta$ is smooth and the coefficients $\frac{x^2}{a}$ and $\frac{a}{x^2}$ are bounded from above and below on $[a,b]$.
	Hence
	\[
	|\mathcal{R}_{\mathrm{loc}}|
	\leqslant \delta \E\int_{Q_T}sa\bar{\theta} z_x^2\,dxdt
	+C_{\delta}\E\int_0^T\int_{[a,b]} s^3\bar{\theta}^3\frac{x^2}{a}z^2\,dxdt.
	\]
	
	Since $\chi$ is smooth, the coefficients of $\overline{\varphi}$ are smooth in time. Arguing as in Proposition \ref{propo3.9}, and using the same cutoff argument as in the proof of the forward Carleman estimate, there exist $c_0,C_0 >0$ and $s_0 >0$ such that, for every $s\geqslant s_0$,
	\begin{align*}
		 &\E\int_{Q_T}\Bigl\{-s(A a \overline{\varphi}_x)_x+sA(a \overline{\varphi}_x)_x+\frac12[sa(a\overline{\varphi}_x)_{xx}]_x-\frac12A_t\Bigr\}z^2dxdt \\
		 &\quad -\E\int_{Q_T}s a\Bigl[(a\overline{\varphi}_x)_x+a\overline{\varphi}_{xx}\Bigr]z_x^2dxdt \\
		 &\geqslant c_0\E\int_{Q_T}\left(sa\bar{\theta} z_x^2+s^3\bar{\theta}^3\frac{x^2}{a}z^2\right)dxdt
		-C_0\E\int_0^T\int_{[a,b]} s^3\bar{\theta}^3 z^2dxdt.
	\end{align*}
	Moreover, since $z=e^{-s\overline{\varphi}}y$ and $\overline{\varphi}$ is deterministic,
	\[
	dz=\left[-(az_x)_x-Az-s(a\overline{\varphi}_x)_x z-2sa\overline{\varphi}_x z_x+e^{-s\overline{\varphi}}F\right]dt
	+e^{-s\overline{\varphi}}\bar{y}\,dW(t),
	\]
	so that
	\[
	(dz)^2=e^{-2s\overline{\varphi}}\bar{y}^{\,2}dt.
	\]
	The term involving $(dz_x)^2$ is nonnegative and may be dropped. Choosing $\delta >0$ small enough and absorbing the corresponding contribution in the left-hand side, we infer
	\[
	\E\int_{0}^{1}A(0)z^2(0)dx+\E\int_{Q_T}sa\bar{\theta} z_x^2dxdt+\E\int_{Q_T}s^3 \bar{\theta}^3\frac{x^2}{a}z^2dxdt
	\leqslant C \mathcal{R},
	\]
	where
	\[
	\mathcal{R}=\E\int_{Q_T}F^2e^{-2s\overline{\varphi}}dxdt
	+\E\int_{Q_T}s^2\bar{\theta}^2 e^{-2s\overline{\varphi}}\bar{y}^{\,2}dxdt
	+\E\int_0^T\int_{[a,b]} s^3\bar{\theta}^3 z^2dxdt.
	\]
	Recalling that $z=e^{-s\overline{\varphi}}y$, we obtain \eqref{nequa6.14}. The general case follows by density.
\end{proof}

\subsection{Controllability for a linear forward parabolic degenerate stochastic equation with two controls}
\begin{proof}[Proof of Theorem \ref{meantheorem2}]
	For $\varepsilon >0$, we consider the penalized functional
	\begin{multline*}
		\ds 	J_{\varepsilon}(h,H)=\frac12\E\int_{Q_T} e^{2s\overline{\varphi}}y^2 dx dt +
		\frac12\E\int_{0}^{T}\int_{\mathscr{O}} e^{2s\overline{\varphi}}s^{-3}\bar{\theta}^{-3}h^2 dx dt \\
		+\frac12\E\int_{Q_T} e^{2s\overline{\varphi}}s^{-2}\bar{\theta}^{-2}H^2 dx dt+\frac{1}{2\varepsilon}\E\int_{(0,1)} |y(T)|^2 dx.
	\end{multline*}
	\begin{equation}\label{form6.3}
		\begin{cases}
			\ds\underset{(h,H)\in \mathcal{H}}{\min} J_{\varepsilon}(h,H)\\
			\text{ subject to  equation \eqref{form6.1}},
		\end{cases}
	\end{equation}
	\begin{multline}\label{form6.4}
		\ds	\mathcal{H}=\big\{(h,H)\in L_{\mathcal{F}}^2 (0,T;L^2(0,1))^2~\;:\; \left(\E\int_{0}^{T}\int_{\mathscr{O}} e^{2s\overline{\varphi}} s^{-3}\bar{\theta}^{-3}|h|^2dx dt \right)\\+\left(\E\int_{Q_T} e^{2s\overline{\varphi}} s^{-2}\bar{\theta}^{-2}|H|^2dx dt \right) <\infty\big\}.
	\end{multline}
	The functional $J_{\varepsilon}$ is continuous, strictly convex and coercive on $\mathcal{H}$. Hence it admits a unique minimizer $(h_{\varepsilon},H_{\varepsilon})$, and we denote by $y_{\varepsilon}$ the corresponding solution of \eqref{form6.1}.
	
	Let $(z_{\varepsilon},Z_{\varepsilon})$ be the solution of the adjoint equation
	\begin{equation}\label{form6.6}
		\begin{cases}
			\ds	dz_{\varepsilon}=\left[- \left(a(x) z_{\varepsilon x}\right)_x  -   e^{2s\overline{\varphi}}y_{\varepsilon}\right]dt + Z_{\varepsilon} dW(t),\,\,\text{ in } Q_T,\\
			C z_{\varepsilon}=0,\qquad \text{ on } \Sigma_T,\\
			z_{\varepsilon}(T,\cdot)=\frac{1}{\varepsilon} y_{\varepsilon}(T,\cdot)\qquad \text{ in } (0,1).
		\end{cases}
	\end{equation}
	The Euler-Lagrange optimality condition reads
	\begin{equation}\label{form6.5}
		\begin{array}{ll}
			\begin{cases}
				\ds h_{\varepsilon}=-\mathbbm{1}_{\mathscr{O}} e^{-2s\overline{\varphi}}s^3\bar{\theta}^3 z_{\varepsilon},\\
				\ds H_{\varepsilon}=- e^{-2s\overline{\varphi}}s^2\bar{\theta}^2 Z_{\varepsilon},
			\end{cases}
			 & \text{ in } Q_T \quad a.s.
		\end{array}
	\end{equation}
	
	Applying It\^o's formula to $\langle y_{\varepsilon}(t),z_{\varepsilon}(t)\rangle_{L^2(0,1)}$ and taking expectations, we obtain
	\begin{multline}
		\ds \E\int_{(0,1)}y_{\varepsilon}(T)z_{\varepsilon}(T) dx =\E\int_{(0,1)}y_{\varepsilon}(0)z_{\varepsilon}(0) dx
		+ \E\int_{Q_T}\left(\left(a(x) y_{\varepsilon x}\right)_x  +F+ \mathbbm{1}_{\mathscr{O}} h_{\varepsilon}\right) z_{\varepsilon} dx dt \\
		+\E\int_{Q_T} y_{\varepsilon} \left(- \left(a(x) z_{\varepsilon x}\right)_x  -   e^{2s\overline{\varphi}}y_{\varepsilon}\right)  dx dt +\E\int_{Q_T}\left(G+H_{\varepsilon}\right)Z_{\varepsilon} dx dt.
	\end{multline}
	Hence
	\begin{multline}
		\ds \E\int_0^T\int_{\mathscr{O}}e^{-2s\overline{\varphi}}s^3 \bar{\theta}^3 z_{\varepsilon}^2 dx dt
		+\E\int_{Q_T}e^{-2s\overline{\varphi}}s^2 \bar{\theta}^2 Z_{\varepsilon}^2  dx dt +\E\int_{Q_T}e^{2s\overline{\varphi}}y_{\varepsilon}^2 dx dt\\+\frac{1}{\varepsilon} \E\int_0^1 |y_{\varepsilon}(T)|^2 dx =\E\int_0^1y_{\varepsilon}(0)z_{\varepsilon}(0) dx
		+ \E\int_{Q_T}  F z_{\varepsilon} dx dt
		+\E\int_{Q_T}G Z_{\varepsilon} dx dt. \numberthis\label{form6.8}
	\end{multline}
	
	Applying Proposition \ref{Prop5.1} to \eqref{form6.6} with source term $-e^{2s\overline{\varphi}}y_{\varepsilon}$, we infer
	\begin{multline}
		\ds \E\int_0^1 A(0)e^{-2s\overline{\varphi}(0)} z_{\varepsilon}^2(0) dx
		+\E\int_{Q_T} s a\bar{\theta} e^{-2s\overline{\varphi}} z_{\varepsilon x}^2 dxdt
		+\E\int_{Q_T} s^3 \bar{\theta}^3 \frac{x^2}{a} z_{\varepsilon}^2 e^{-2s\overline{\varphi}} dxdt \\
		\leqslant C\E\int_{Q_T}e^{2s\overline{\varphi}} y_{\varepsilon}^2 dxdt
		+C\E\int_{Q_T}s^2\bar{\theta}^2 e^{-2s\overline{\varphi}} Z_{\varepsilon}^2 dxdt
		+C\E\int_0^T\int_{\mathscr{O}} s^3 \bar{\theta}^3 e^{-2s\overline{\varphi}} z_{\varepsilon}^2 dxdt.
		\numberthis\label{nequa6.14_bis}
	\end{multline}
	Combining \eqref{form6.8}, \eqref{form6.5} and \eqref{nequa6.14_bis}, and using Young's inequality, we get for any $\delta >0$
	\begin{multline}
		\ds \E\int_0^T\int_{\mathscr{O}}e^{-2s\overline{\varphi}}s^3 \bar{\theta}^3 z_{\varepsilon}^2 dx dt
		+\E\int_{Q_T}e^{-2s\overline{\varphi}}s^2 \bar{\theta}^2 Z_{\varepsilon}^2 dx dt +\E\int_{Q_T}e^{2s\overline{\varphi}}y_{\varepsilon}^2 dx dt+\frac{1}{\varepsilon} \E\int_0^1 |y_{\varepsilon}(T)|^2 dx  \\
		\leqslant \delta \Biggl(\E\int_0^1 A(0)e^{-2s\overline{\varphi}(0)} z_{\varepsilon}^2(0) dx
		+\E\int_{Q_T} s^3\bar{\theta}^{3}\frac{x^2}{a} e^{-2s\overline{\varphi}} z_{\varepsilon}^2 dx dt
		+\E\int_{Q_T} s^{2}\bar{\theta}^{2} e^{-2s\overline{\varphi}} Z_{\varepsilon}^2 dx dt \Biggr) \\
		+C_{\delta}\left( \E\int_0^1 \frac{e^{2s\overline{\varphi}(0)}}{A(0)} y_{0}^2 dx
		+ \E\int_{Q_T}  s^{-3}\bar{\theta}^{-3}\frac{a}{x^2} e^{2s\overline{\varphi}} F^2  dx dt
		+\E\int_{Q_T}  s^{-2}\bar{\theta}^{-2} e^{2s\overline{\varphi}} G^2 dx dt\right). \numberthis\label{nequa5.19}
	\end{multline}
	Here the restriction on $y_0$ is essential. Since $\bar\theta$ is constant near
	$t=0$, one has $\bar\theta_t(0)=0$ and, on $[0,a_1]$,
	\[
	A(0,x)=s^2\bar\theta(0)^2a(x)\beta'(x)^2
	=s^2\bar\theta(0)^2\frac{x^2}{a(x)}.
	\]
	Moreover, $\overline\varphi(0,\cdot)$ is bounded on $[0,1]$, and away from
	$x=0$ both $A(0,\cdot)$ and its reciprocal are bounded. Consequently,
	\begin{equation}\label{initial-weight-bound}
	\E\int_0^1\frac{e^{2s\overline\varphi(0)}}{A(0)}y_0^2\,dx
	\leqslant C_s\E\int_0^1\frac{a(x)}{x^2}y_0^2\,dx<\infty.
	\end{equation}
	More explicitly, one may take
	\[
	C_s=\sup_{x\in(0,1]}
	\frac{e^{2s\overline\varphi(0,x)}}{A(0,x)}\frac{x^2}{a(x)}<\infty.
	\]
	Thus, after $s>s_0$ has been fixed, the generic constant in
	\eqref{nequa5.20}, \eqref{nequa5.21}, and ultimately
	\eqref{equtheo5.2} absorbs $C_s$; no uniformity with respect to $s$ is
	claimed.
	Choosing $\delta$ small enough and absorbing the first line of the right-hand side with the help of \eqref{nequa6.14_bis}, we obtain
	\begin{multline}
		\ds \E\int_0^T\int_{\mathscr{O}}e^{-2s\overline{\varphi}}s^3 \bar{\theta}^3 z_{\varepsilon}^2 dx dt
		+\E\int_{Q_T}e^{-2s\overline{\varphi}}s^2 \bar{\theta}^2 Z_{\varepsilon}^2 dx dt +\E\int_{Q_T}e^{2s\overline{\varphi}}y_{\varepsilon}^2 dx dt+\frac{1}{\varepsilon} \E\int_0^1 |y_{\varepsilon}(T)|^2 dx  \\ \leqslant
		C\left( \E\int_0^1 \frac{a(x)}{x^2} y_{0}^2 dx
		+ \E\int_{Q_T}  s^{-3}\bar{\theta}^{-3}\frac{a}{x^2} e^{2s\overline{\varphi}} F^2  dx dt
		+\E\int_{Q_T}  s^{-2}\bar{\theta}^{-2}  e^{2s\overline{\varphi}} G^2 dx dt\right). \numberthis\label{nequa5.20}
	\end{multline}
	
	\begin{multline}
		\ds \E\int_0^T\int_{\mathscr{O}}e^{2s\overline{\varphi}}s^{-3} \bar{\theta}^{-3} h_{\varepsilon}^2 dx dt
		+\E\int_{Q_T}e^{2s\overline{\varphi}}s^{-2} \bar{\theta}^{-2} H_{\varepsilon}^2 dx dt +\E\int_{Q_T}e^{2s\overline{\varphi}}y_{\varepsilon}^2 dx dt+\frac{1}{\varepsilon} \E\int_0^1 |y_{\varepsilon}(T)|^2 dx  \\ \leqslant
		C\left( \E\int_0^1 \frac{a(x)}{x^2} y_{0}^2 dx
		+ \E\int_{Q_T}  s^{-3}\bar{\theta}^{-3}\frac{a}{x^2} e^{2s\overline{\varphi}} F^2  dx dt
		+\E\int_{Q_T}  s^{-2}\bar{\theta}^{-2} e^{2s\overline{\varphi}} G^2 dx dt\right). \numberthis\label{nequa5.21}
	\end{multline}
	In particular,
	\[
	\E\int_0^1 |y_{\varepsilon}(T)|^2dx\leqslant C\varepsilon,
	\]
	so that $y_{\varepsilon}(T)\to 0$ strongly in $L^2(\Omega;L^2(0,1))$. 
	
	By \eqref{nequa5.21} and the standard energy estimate for \eqref{form6.1}, the families $(h_{\varepsilon})$, $(H_{\varepsilon})$ and $(y_{\varepsilon})$ are bounded respectively in $L_{\mathcal{F}}^2(0,T;L^2(\mathscr{O}))$, $L_{\mathcal{F}}^2(0,T;L^2(0,1))$ and $L_{\mathcal{F}}^2(\Omega;C([0,T];L^2(0,1)))$. Therefore, up to a subsequence,
	\begin{equation}\label{equ5.22}
		\begin{cases}
			h_{\varepsilon}	\rightharpoonup \widehat{h} \quad \text{weakly in } L_{\mathcal{F}}^2(0,T;L^2(\mathscr{O})), \\
			H_{\varepsilon}	\rightharpoonup \widehat{H} \quad  \text{weakly in } L_{\mathcal{F}}^2(0,T;L^2(0,1)), \\
			y_{\varepsilon}	\rightharpoonup \widehat{y} \quad \text{weakly in } L_{\mathcal{F}}^2(0,T;L^2(0,1)).
		\end{cases}
	\end{equation}
	Let $(z,Z)$ be the solution of
	\begin{equation}\label{equ5.23}
		\begin{cases}
			\ds	dz=\left[- \left(a(x) z_{x}\right)_x  - m \right]dt + Z dW(t),\,\,\text{ in } Q_T,\\
			C z=0,\qquad \text{ on } \Sigma_T,\\
			z(T,\cdot)=0   \qquad \text{ in } (0,1),
		\end{cases}
	\end{equation}
	for arbitrary $m\in  L_{\mathcal{F}}^2(0,T;L^2(0,1))$. By duality,
	\begin{multline}\label{equ5.24}
		\ds  -\E\int_{(0,1)}y_{0}z(0) dx=-\E\int_{Q_T} y_{\varepsilon} m dx dt+
		\E\int_{Q_T}F z dx dt\\ +\E\int_{0}^{T}\int_{\mathscr{O}} h_{\varepsilon} z dx dt +
		\E\int_{Q_T} GZ dx dt+\E\int_{Q_T}H_{\varepsilon}Z dx dt.
	\end{multline}
	Since $z\in L_{\mathcal{F}}^2(\Omega;C([0,T];L^2(0,1)))\cap L_{\mathcal{F}}^2(0,T;H_a^1)$ and $Z\in L_{\mathcal{F}}^2(0,T;L^2(0,1))$, each term in \eqref{equ5.24} is continuous with respect to the weak convergences in \eqref{equ5.22}. Passing to the limit as $\varepsilon\to 0$, we get
	\begin{multline}\label{equ5.25}
		\ds  -\E\int_{(0,1)}y_{0}z(0) dx=-\E\int_{Q_T} \widehat{y} m dx dt+
		\E\int_{Q_T}F z dx dt\\ +\E\int_{0}^{T}\int_{\mathscr{O}} \widehat{h} z dx dt +
		\E\int_{Q_T} GZ dx dt+\E\int_{Q_T}\widehat{H}Z dx dt.
	\end{multline}
	By the standard transposition argument, identity \eqref{equ5.25} exactly means that $(\widehat{y},\widehat{h},\widehat{H})$ is the solution of \eqref{form6.1}. On the other hand, the boundedness of $(y_{\varepsilon})$ in $L_{\mathcal{F}}^2(\Omega;C([0,T];L^2(0,1)))$ implies, up to extraction, that $y_{\varepsilon}(T)\rightharpoonup \widehat{y}(T)$ weakly in $L^2(\Omega;L^2(0,1))$. Since we already know that $y_{\varepsilon}(T)\to 0$ strongly in the same space, the weak limit is unique and therefore $\widehat{y}(T)=0$. Finally, estimate \eqref{equtheo5.2} follows from \eqref{nequa5.21} by weak lower semicontinuity.
\end{proof}

\section{Conclusion}
In this work, we established null controllability results for stochastic degenerate parabolic equations under weak and strong degeneracy assumptions. The analysis rests on two complementary Carleman arguments.

First, we derived a global Carleman estimate for a forward stochastic degenerate equation with multiplicative noise. This estimate yields an observability inequality and a unique continuation property, which in turn imply null controllability for the backward stochastic degenerate system.

Second, by adapting HUM to the degenerate stochastic setting and by proving a Carleman estimate for the backward equation with a weight that does not vanish at $t=0$, we obtained null controllability for a forward stochastic degenerate equation with two controls.

These results extend the available controllability theory beyond the prototype diffusion $a(x)=x^\alpha$ and provide a framework for further developments, including semilinear models and robustness questions for stochastic degenerate dynamics.

\section{Appendix}
In this section, we establish a Caccioppoli type inequality for a forward stochastic parabolic equation. Let $\mathscr{O}^{\prime}$ be an open subset of $(0,1)$ such that $\overline{\mathscr{O}^{\prime}}\subset (0,1)$, and let $\mathscr{O}^{\prime\prime}\Subset \mathscr{O}^{\prime}$ be a nonempty open subset. We consider
\begin{equation}\label{equ:5.1}
	d_t v(t,x)=\left(a(x)v_x\right)_xdt+bv(t,x)dt+f(t,x)dt+g(t,x)dW(t)
	\quad\text{ in } [0,T]\times\mathscr{O}^{\prime}.
\end{equation}

\begin{lemma}[Caccioppoli's inequality]\label{lem:caccioppoli}~\par 
	There exists a positive constant $C$ such that every solution $v$ of \eqref{equ:5.1} satisfies
	\begin{multline}\label{Caccioequa}
		\E\int_0^T\int_{\mathscr{O}^{\prime\prime}} v_x^2 e^{-2\mu\varphi}dxdt\\
		\leqslant C\left(\E\int_0^T\int_{\mathscr{O}^{\prime}} \mu^2\theta^2v^2  e^{-2\mu\varphi}dxdt
		+\E\int_0^T\int_{\mathscr{O}^{\prime}} f^2e^{-2\mu\varphi}dxdt
		+\E\int_0^T\int_{\mathscr{O}^{\prime}} g^2e^{-2\mu\varphi}dxdt\right),
	\end{multline}
	for all $\mu$ large enough.
\end{lemma} 	
\begin{proof}
	Let $\eta:\R\to\R$ be a smooth cut-off function satisfying $0\leqslant\eta\leqslant1$, $\eta=1$ in $\mathscr{O}^{\prime\prime}$,  $\mathrm{supp}\,\eta\subset\mathscr{O}^{\prime}$, and $\eta_x^2/\eta\in L^{\infty}(\mathscr{O}^{\prime})$. Applying It\^o's formula to
	$$\int_{\mathscr{O}^{\prime}}\eta v^2e^{-2\mu\varphi}dx,$$
	integrating over $[0,T]$ and taking expectation, we obtain
	\begin{align}
		&\E\int_{\mathscr{O}^{\prime}}\eta v^2(T)e^{-2\mu\varphi(T)}dx
		-\E\int_{\mathscr{O}^{\prime}}\eta v^2(0)e^{-2\mu\varphi(0)}dx \notag\\
		&=-2\E\int_0^T\int_{\mathscr{O}^{\prime}}\eta_xav_xv e^{-2\mu\varphi}dxdt
		-2\E\int_0^T\int_{\mathscr{O}^{\prime}}\eta av_x^2e^{-2\mu\varphi}dxdt \notag\\
		 &\quad+4\E\int_0^T\int_{\mathscr{O}^{\prime}}\eta\mu\varphi_xav_xv e^{-2\mu\varphi}dxdt
		+2\E\int_0^T\int_{\mathscr{O}^{\prime}}b\eta v^2e^{-2\mu\varphi}dxdt \notag\\
		 &\quad+2\E\int_0^T\int_{\mathscr{O}^{\prime}}\eta fve^{-2\mu\varphi}dxdt
		-2\E\int_0^T\int_{\mathscr{O}^{\prime}}\mu\eta\varphi_t v^2e^{-2\mu\varphi}dxdt
		+\E\int_0^T\int_{\mathscr{O}^{\prime}}\eta g^2e^{-2\mu\varphi}dxdt . \label{caccio-ito}
	\end{align}
	In the application used above, the Carleman weight vanishes at the temporal
	endpoints, so the two trace terms are obtained by a standard truncation and
	limit argument and do not contribute. Equivalently, for a regularized weight
	the nonnegative terminal trace is kept on the left and then discarded; the
	initial trace is handled before passing to the singular-weight limit.
	Thus,
	\begin{align*}
		\E\int_0^T\int_{\mathscr{O}^{\prime\prime}} v_x^2e^{-2\mu\varphi}dxdt
		 &\leqslant C\E\int_0^T\int_{\mathscr{O}^{\prime}}\eta av_x^2e^{-2\mu\varphi}dxdt\\
		 &\leqslant C\left(J_1+J_2+J_3+J_4+J_5\right),
	\end{align*}
	where
	\begin{align*}
		J_1 &=\E\int_0^T\int_{\mathscr{O}^{\prime}}|\eta_x|a|v_x||v|e^{-2\mu\varphi}dxdt,\\
		J_2 &=\E\int_0^T\int_{\mathscr{O}^{\prime}}\eta\mu|\varphi_x|a|v_x||v|e^{-2\mu\varphi}dxdt,\\
		J_3 &=\E\int_0^T\int_{\mathscr{O}^{\prime}}\eta|f||v|e^{-2\mu\varphi}dxdt,\\
		J_4 &=\E\int_0^T\int_{\mathscr{O}^{\prime}}\eta\left(|b|+\mu|\varphi_t|\right)v^2e^{-2\mu\varphi}dxdt,\\
		J_5 &=\E\int_0^T\int_{\mathscr{O}^{\prime}}\eta g^2e^{-2\mu\varphi}dxdt.
	\end{align*}
	For every $\varepsilon >0$, Young's inequality gives
	\begin{align*}
		J_1 &\leqslant \varepsilon\E\int_0^T\int_{\mathscr{O}^{\prime}}\eta av_x^2e^{-2\mu\varphi}dxdt
		+C_{\varepsilon}\E\int_0^T\int_{\mathscr{O}^{\prime}}v^2e^{-2\mu\varphi}dxdt,\\
		J_2 &\leqslant \varepsilon\E\int_0^T\int_{\mathscr{O}^{\prime}}\eta av_x^2e^{-2\mu\varphi}dxdt
		+C_{\varepsilon}\E\int_0^T\int_{\mathscr{O}^{\prime}}\mu^2\theta^2v^2e^{-2\mu\varphi}dxdt,\\
		J_3 &\leqslant C\E\int_0^T\int_{\mathscr{O}^{\prime}}f^2e^{-2\mu\varphi}dxdt
		+C\E\int_0^T\int_{\mathscr{O}^{\prime}}v^2e^{-2\mu\varphi}dxdt,\\
		J_4 &\leqslant C\E\int_0^T\int_{\mathscr{O}^{\prime}}\mu^2\theta^2v^2e^{-2\mu\varphi}dxdt.
	\end{align*}
	Here we used that $a$ is bounded from above and below on $\mathscr{O}^{\prime}$, and that $|\varphi_x|\leqslant C\theta$, $|\varphi_t|\leqslant C\theta^2$ on $[0,T]\times\mathscr{O}^{\prime}$. Choosing $\varepsilon >0$ small enough and taking $\mu$ large enough yields \eqref{Caccioequa}.
\end{proof}

\end{document}